\documentclass[11 pt]{article}
\usepackage{latexsym}
\usepackage{graphicx}
\usepackage{amsmath}
\usepackage{amsthm}
\usepackage{amssymb,amsfonts,enumerate}
\usepackage[pdfpagelabels=true,plainpages=false,colorlinks=true,linkcolor=blue,citecolor=blue,urlcolor=blue]{hyperref}
\newtheorem{theorem}{Theorem}[section]

\newtheorem{definition}{Definition}[section]
\newtheorem{remark}{Remark}[section]

\setlength{\topmargin}{-0.5in}
\setlength{\topskip}{0.1in} % between header and text
\setlength{\textheight}{9.2in} % height of main text
\setlength{\textwidth}{5.4in} % width of text
\addtolength{\evensidemargin}{4cm} \addtolength{\oddsidemargin}{-0.9cm} \addtolength{\textwidth}{2cm}

\begin{document}
\begin{center}
{\Large{\bf New class of sequences of fuzzy numbers defined by modulus function and generalized weighted mean}}\\\vspace{.5cm}
Sarita Ojha and P. D. Srivastava\\
Department of Mathematics, Indian Institute of Technology,\\ Kharagpur 721302, India
\end{center}

\section*{Abstract}
In this paper, We have introduced a new class of sequences of fuzzy numbers defined by using modulus function and generalized weighted mean over the class defined in \cite{OS}. We have proved that this class form a quasilinear complete metric space under a suitable metric. Various inclusion relations and some properties such as solidness, symmetry are investigated.\\
{\bf Keywords :} Sequences of fuzzy numbers; Modulus function; Generalized weighted mean.\\
%\PACS{PACS code1 \and PACS code2 \and more}
{\bf AMS subject classification :} 46S40; 03E72.

\section{Introduction}
Zadeh \cite{Z} has introduced the concept of fuzzy sets and fuzzy set operations. Subsequently several authors have discussed various
aspects of its theory and applications such as fuzzy topological spaces, similarity relations and fuzzy orderings, fuzzy measures of
fuzzy events, fuzzy mathematical programming etc. \\
In recent years, there has been an increasing interest in various mathematical aspects of operations defined on fuzzy sets. Based on these, sequences of fuzzy numbers have been introduced by several authors and they have obtained many important properties. Matloka \cite{M} has studied the bounded and convergent sequences of fuzzy numbers where it is shown that every convergent sequence is bounded. The class of all $p$-summable convergent sequences of fuzzy numbers is introduced by Nanda \cite{N}. Later on, Talo and Basar \cite{TB}, \cite{TB1} determined the dual of classical sets of sequences of fuzzy numbers and related matrix transformations. Recently bounded variation for fuzzy numbers is studied by Tripathy et al in \cite{TD}\cite{TD1}.\\
As the set of all real numbers can be embedded in the set of all fuzzy numbers, many results in reals can be considered as a special case of those fuzzy numbers. However, since the set of all fuzzy numbers is partially ordered and does not carry a group structure, most of the facts known for the sequences of real numbers may not be valid in fuzzy setting. Therefore, this theory is not a trivial extension of what has been known in real case.\\
Recently, Polat et al \cite{PKS} have defined sequence spaces of real numbers using generalized weighted mean. This has been extended by Ojha and Srivastava \cite{OS} by introducing new sets of sequences $\lambda^F (u,v)$,  $\lambda=l_p\ (0< p\leq\infty),c,c_0$ of fuzzy numbers. Various properties such as quasilinearity, completeness of these sequences are investigated by them. In the present paper, we have defined a more general class of sequences of fuzzy numbers using modulus function over the class $\lambda^F (u,v)$.

\section{Definition and Preliminaries}
\begin{definition}
A fuzzy real number $X:\mathbb{R} \to [0,1]$ is a fuzzy set satisfying the following conditions:
\begin{enumerate}[(i)]
\item $X$ is normal i.e. there exists an $t\in R$ such that $X(t)=1$.
\item $X$ is fuzzy convex i.e. $X[\lambda t+(1-\lambda)s]\geq \min \{X(t),X(s)\}$ for all $t,s\in R$ and for all $\lambda\in [0,1]$.
\item $X$ is upper semi-continuous.
\item $X^0=\overline{\{t\in R:X(t)>0\}}$ is compact.
\end{enumerate}
\end{definition}
Clearly, $\mathbb{R}$ is embedded in $L(R)$, the set of all fuzzy numbers, in this way: for each $r\in \mathbb{R}$, $\overline{r}\in L(R)$ is defined as,
\begin{center}
$\overline{r}(t) = \left\{
\begin{array}{c l}
  1, & t=r \\
  0, & t\neq r
\end{array}
\right.$
\end{center}
For, $0<\alpha\leq1$, $\alpha$-cut of a fuzzy number $X$ is defined by $X^{(\alpha)}=\{t\in \mathbb{R}:X(t)\geq\alpha\}$. The set $X^{(\alpha)}$ is a closed, bounded and non-empty interval for each $\alpha\in [0,1]$.\\
Also, we denote the triangular fuzzy numbers as $X=[a,b,c]$ i.e. its membership function is defined as,
\begin{center}
$X(t) = \left\{
\begin{array}{c l}
  \frac{x-a}{b-a}, & a\leq t\leq b \\
  \frac{c-x}{c-b}, & b\leq t\leq c
\end{array}
\right.$
\end{center}
\begin{center}
\begin{figure}
\centering
\begin{minipage}{.5\textwidth}
  %\centering
        \includegraphics[height= 4cm, width=5cm]{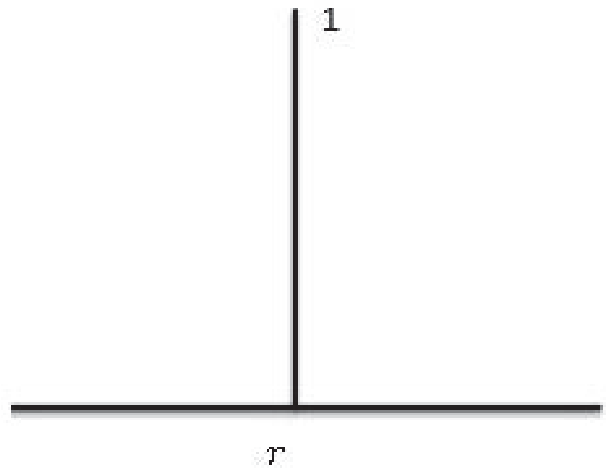}
        \caption{Fuzzy number $\bar{r}$}
  \label{fig:sub1}
\end{minipage}%
\begin{minipage}{.5\textwidth}
  %\centering
        \includegraphics[height= 4cm, width=5cm]{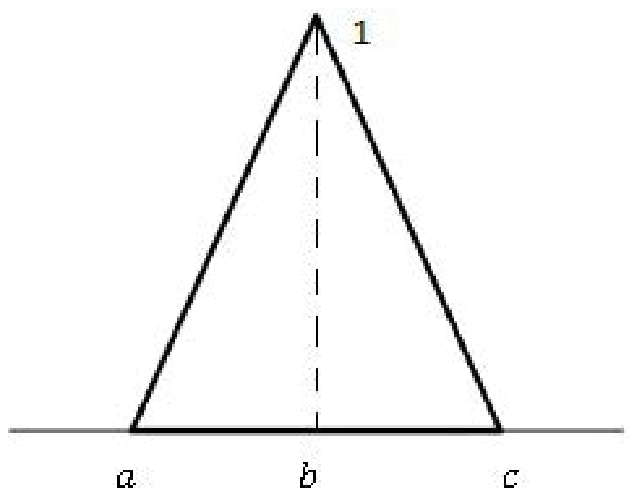}
        \caption{Fuzzy number $[a,b,c]$}
   \label{fig:sub2}
\end{minipage}
\end{figure}
\end{center}
Now for any two fuzzy numbers $X,Y$, Matloka \cite{M} has introduced a metric,
\begin{eqnarray*}
d(X,Y)=\sup\limits_{0\leq \alpha \leq1} \max\{|\underline{X}^{(\alpha)}-\underline{Y}^{(\alpha)}|,|\overline{X}^{(\alpha)}-\overline{Y}^{(\alpha)}|\}
\end{eqnarray*}
where $\underline{X}^{(\alpha)}$ and $\overline{X}^{(\alpha)}$ are the lower and upper bound of the $\alpha$-cut and showed that $(L(R),d)$ is a complete metric space.\\
A partial ordering relation on $L(R)$ is defined as follows:
\begin{eqnarray}
&& X \preceq Y \Leftrightarrow X^{(\alpha)}\preceq Y^{(\alpha)} \nonumber \\
&\Leftrightarrow& \overline{X}^{\alpha}\leq \overline{Y}^{\alpha} \ \mbox{and} \ \underline{X}^{\alpha}\leq \underline{Y}^{\alpha}
\end{eqnarray}
$\mbox{for all} \ \alpha\in [0,1]$. The absolute value of $X\in L(R)$ is defined by,
\begin{center}
$|X|(t) = \left\{
\begin{array}{c l}
  \max\{X(t),X(-t)\}, & t\geq0 \\
  0, & t<0
\end{array}
\right.$
\end{center}

\begin{theorem}(Representation Theorem) \cite{TB}\\ Let $X ^{(\alpha)} = [\underline{X} ^{(\alpha)}, \overline{X} ^{(\alpha)}]$ for $X\in L(R)$ and for
each $\alpha\in[0, 1]$. Then the following statements hold:
\begin{enumerate}[(i)]
\item $\underline{X} ^{(\alpha)}$ is a bounded and non-decreasing left continuous function on (0, 1].
\item $\overline{X} ^{(\alpha)}$ is a bounded and non-increasing left continuous function on (0, 1].
\item The functions $\underline{X} ^{(\alpha)}$ and $\overline{X} ^{(\alpha)}$ are right continuous at the point $\alpha=0$.
\item $\underline{X} ^{(1)} \leq \overline{X} ^{(1)}$.
\end{enumerate}
Conversely, if the pair of functions $P$ and $Q$ satisfies the conditions (i)-(iv), then
there exists a unique $X\in L(R)$ such that $X ^{(\alpha)} = [P(\alpha), Q(\alpha)]$ for each $\alpha\in [0, 1]$.
The fuzzy number $X$ corresponding to the pair of functions $P$ and $Q$ is defined by
$X : R\to[0, 1],\ X(t) = \sup\{\alpha : P(\alpha)\leq t \leq Q(\alpha)\}$.
\end{theorem}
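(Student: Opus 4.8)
The plan is to prove the two directions separately, treating the statement as the classical correspondence between a fuzzy number and the pair of endpoint functions of its $\alpha$-cuts.

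For the necessity direction (that (i)--(iv) hold for a given $X\in L(R)$), I would first record that the level sets are nested: if $0\le\alpha_1\le\alpha_2\le 1$ then $X^{(\alpha_2)}\subseteq X^{(\alpha_1)}$, directly from the definition $X^{(\alpha)}=\{t:X(t)\ge\alpha\}$. Containment of intervals gives $\underline{X}^{(\alpha_1)}\le\underline{X}^{(\alpha_2)}$ and $\overline{X}^{(\alpha_2)}\le\overline{X}^{(\alpha_1)}$, which is exactly the monotonicity in (i) and (ii). Boundedness is immediate because every $X^{(\alpha)}$ sits inside the compact set $X^0$ from the definition of a fuzzy number, and condition (iv) of the theorem, $\underline{X}^{(1)}\le\overline{X}^{(1)}$, is just the non-emptiness of $X^{(1)}$ guaranteed by normality.

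The crux of necessity is the left continuity on $(0,1]$. For $\alpha_n\uparrow\alpha$ with $\alpha\in(0,1]$, I would use the set identity $X^{(\alpha)}=\bigcap_n X^{(\alpha_n)}$, which holds because $X(t)\ge\alpha$ iff $X(t)\ge\alpha_n$ for every $n$. The right-hand side is a nested intersection of the closed intervals $[\underline{X}^{(\alpha_n)},\overline{X}^{(\alpha_n)}]$, whose intersection is $[\lim_n\underline{X}^{(\alpha_n)},\lim_n\overline{X}^{(\alpha_n)}]$, the monotone limits existing by (i) and (ii). Equating endpoints with those of $X^{(\alpha)}$ yields left continuity. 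For the right continuity at $0$ in condition (iii), I would instead use that $X^0=\overline{\{t:X(t)>0\}}$ is the closure of $\bigcup_{\alpha>0}X^{(\alpha)}$, so that as $\alpha_n\downarrow 0$ the endpoints converge to those of $X^{(0)}$.

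For the sufficiency direction, given $P,Q$ satisfying (i)--(iv) I would set $X(t)=\sup\{\alpha:P(\alpha)\le t\le Q(\alpha)\}$ and prove the key claim that $\{t:X(t)\ge\beta\}=[P(\beta),Q(\beta)]$ for each $\beta\in(0,1]$. The inclusion $\supseteq$ is immediate; for $\subseteq$, set $A_t=\{\alpha:P(\alpha)\le t\le Q(\alpha)\}$ and observe that monotonicity of $P,Q$ makes $A_t$ downward closed with supremum $X(t)$. The essential point is that this supremum is attained: choosing $\alpha_n\uparrow X(t)$ in $A_t$ and passing to the limit, left continuity of $P$ and $Q$ gives $P(X(t))\le t\le Q(X(t))$, so $A_t=(0,X(t)]$ and hence $\beta\le X(t)$ forces $t\in[P(\beta),Q(\beta)]$. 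With this claim the four defining properties follow routinely: the level sets being the intervals $[P(\beta),Q(\beta)]$ gives fuzzy convexity; their closedness gives upper semicontinuity; boundedness of $P,Q$ gives compact support; and condition (iv) gives $X^{(1)}=[P(1),Q(1)]\ne\emptyset$, hence normality. Uniqueness follows because any fuzzy number is recovered from its level sets via $X(t)=\sup\{\alpha:t\in X^{(\alpha)}\}$, so two fuzzy numbers with identical $\alpha$-cuts coincide. I expect the attainment-of-the-supremum step, i.e. the precise use of left continuity to pin each level set down to the closed interval rather than a half-open one, to be the main obstacle, with the behavior at the endpoints $\alpha=0$ and $\alpha=1$ requiring separate care.
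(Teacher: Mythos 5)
The paper itself gives no proof of this theorem: it is quoted as a known result from the cited reference of Talo and Basar \cite{TB} and used purely as a preliminary, so there is no in-paper argument to compare against. Your proof is correct and follows the standard route for the representation theorem: nestedness of the $\alpha$-cuts gives monotonicity and boundedness, the identity $X^{(\alpha)}=\bigcap_n X^{(\alpha_n)}$ for $\alpha_n\uparrow\alpha$ gives left continuity, the description of the support as the closure of $\bigcup_{\alpha>0}X^{(\alpha)}$ gives right continuity at $0$, and in the converse direction the attainment of $\sup A_t$ via left continuity of $P$ and $Q$ is exactly the key step that pins each level set down to the closed interval $[P(\beta),Q(\beta)]$. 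In a full write-up you should make explicit three small points your sketch leaves implicit: that the nested intersection of closed intervals is nonempty (guaranteed by normality, so that it really equals $[\lim_n\underline{X}^{(\alpha_n)},\lim_n\overline{X}^{(\alpha_n)}]$); that $P(\beta)\le Q(\beta)$ for every $\beta$, which follows from (i), (ii) and (iv) via $P(\beta)\le P(1)\le Q(1)\le Q(\beta)$ and is needed for the level sets to be nonempty intervals; and that the constructed $X$ satisfies $X^{(0)}=[P(0),Q(0)]$ as well, which is precisely where condition (iii) enters the converse.
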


\noindent From the above theorem, it is clear that for any $X\in L(R)$, $d(X,\bar{0})=\max\{|\underline{X^0}|,|\overline{X^0}|\}$. Let $w^F$ be the set of sequences all fuzzy numbers.

\begin{definition} Let $\tau^F\subset w^F$. Then the set $\tau^F$ is said to be
\begin{enumerate}
\item Solid if $(Y_k)\in w^F$, whenever $(X_k)\in w^F$ and $|Y_k|\preceq |X_k|$, for all $k\in\mathbb{N}$.
\item Symmetric if $(X_{\pi (k)})\in w^F$, whenever $(X_k)\in w^F$, where $\pi$ is a permutation on $\mathbb{N}$.
\item Convergence free if $(Y_k)\in w^F$, whenever $(X_k)\in w^F$ and $X_k=\bar{0}$ implies $Y_k=\bar{0}$.
\end{enumerate}
\end{definition}
Throughout this paper, $U$ be the set of all real sequences $u=(u_n)$, $n=0,1,2,...$ such that for all $n$, $u_n\neq0$. Let $u,v\in U$ and consider the matrix $G(u,v)=(g_{nk})$ as
\begin{center}
$g_{nk} = \left\{
\begin{array}{c l}
  u_nv_k, & 0\leq k\leq n \\
  0, & k>n
\end{array}
\right.$
\end{center}
\noindent for all $k,n\in\mathbb{N}$ where $u_n$ depends only on $n$ and $v_k$ depends only on $k$. This matrix $G(u,v)$ is called generalized weighted mean or factorable matrix. Many authors such as Altay and Basar \cite{AB}, Polat et al. \cite{PKS} etc constructed real sequence spaces by using this generalized weighted mean. Recently in 2014, Ojha and Srivastava \cite{OS} used this matrix to introduce a new class of sequences of fuzzy numbers.\\
In 2014, Ojha and Srivastava \cite{OS} introduced a new class defined as,
\begin{eqnarray*}
\lambda^F(u,v)=\Big\{X=(X_k)\in w^F: \Big(\sum\limits_{i=0} ^k u_kv_id(X_i,\bar{0})\Big)\in\lambda\Big\}
\end{eqnarray*}
where $\lambda=l_\infty,c,c_0,l_p\ (p>1)$ and $u,v\in U$. In the present paper, we have defined the set of sequences of fuzzy numbers by using modulus function and
generalized weighted mean over $\lambda^F(u,v)$ and investigated various properties of it.

\begin{remark}
As special cases, we get Cesaro mean, weighted mean of fuzzy numbers etc. eg.
\begin{enumerate}[(i)]
\item For $u_k=\frac{1}{k}$ and $v_k=1$ for all $k\in\mathbb{N}$, we get $\sum\limits_{i=0} ^k u_kv_id(X_i,\bar{0})=\frac{1}{k} \sum\limits_{i=0} ^k d(X_i,\bar{0})$
which is Cesaro sequence of fuzzy numbers.
\item  If we take $v_i$'s as positive numbers and $u_k=\frac{1}{v_1+v_2+\cdots+v_k}$, then
\begin{equation*}
\frac{v_1d(X_1,\bar{0})+v_2d(X_2,\bar{0})+\cdots+v_kd(X_k,\bar{0})}{v_1+v_2+\cdots+v_k}
\end{equation*} is the weighted mean over $(L(R),d)$.
\end{enumerate}
\end{remark}

\noindent Ruckle \cite{R} introduced the concept of modulus function and also defined a new sequence spaces by using modulus function of real or complex numbers.
\begin{definition}
A modulus function  $f$ is a function $f: [0,\infty) \to [0,\infty)$ such that,
\begin{enumerate}[(i)]
\item $f(x)=0$ iff $x=0$.
\item $f(x+y)\leq f(x)+f(y)$.
\item $f$ is increasing.
\item $f$ is continuous from the right at 0.
\end{enumerate}
\end{definition}
It follows that $f$ must be continuous everywhere on $[0,\infty)$.  Using the modulus function, Maddox \cite{Mad}\cite{Mad1} introduced the generalizations
of the classical spaces of strongly summable sequences by defining certain sequence spaces using a modulus function $f$. \\

\noindent Let $r=(r_k)$ be a bounded sequence of strictly positive real numbers. If $H=\sup\limits_{k} r_k$, then for any real or complex numbers $a_k, b_k$, we have
\begin{eqnarray*}
|a_k+b_k|^{r_k}\leq D (|a_k|^{r_k}+|b_k|^{r_k})\\
\mbox{and}\ |c|^{r_k}\leq\max(1,|c|^H)
\end{eqnarray*}
where $D=\max (1,2^{H-1})$ and $c$ is any complex number.

\section{Main results}
Let $u,v\in U$ and $r=(r_k)$ is a bounded sequence of strictly positive real numbers. We define a new set of sequences of fuzzy numbers using the modulus function $f$ as follows:
\begin{equation*}
\lambda^F [u,v,f,r] = \Big\{X=(X_k)\in w^F:\Big[f\Big(\Big|u_k\sum\limits_{i=0}^k v_id(X_i,\bar{0})\Big|\Big)\Big]^{r_k}\in \lambda\Big\}
\end{equation*}
for $\lambda=l_p\ (0<p\leq\infty),c_0$ and in case of $\lambda=c$, we define the class as follows:
\begin{eqnarray*}
c^F [u,v,f,r] = \Big\{X=(X_k)\in w^F:\Big[f\Big(\Big|u_k\sum\limits_{i=0}^k v_id(X_i,X_0)\Big|\Big)\Big]^{r_k}\to 0 \ \mbox{for}\\ \mbox{some}\ X_0\in L(R)\Big\}
\end{eqnarray*}

\noindent {\bf Note :} In case of $f(x)=x$ and $r_k=1$, the above class coincides with the class defined by Ojha and Srivastava \cite{OS}.

\begin{theorem}
The set $\lambda^F [u,v,f,r]$ is a quasilinear space for $\lambda=l_p\ (0<p\leq\infty),c_0,c$.
\end{theorem}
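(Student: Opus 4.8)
The plan is to exhibit $\lambda^F[u,v,f,r]$ as a \emph{quasilinear subspace} of $w^F$, the ambient quasilinear space of all fuzzy sequences under the coordinatewise operations $(X_k)+(Y_k)=(X_k+Y_k)$, $\mu(X_k)=(\mu X_k)$ and the coordinatewise order inherited from (1). Since $L(R)$ is quasilinear and the operations are defined term by term, the remaining quasilinear axioms (commutativity and associativity of $+$, the two scalar-distributivity relations—one of which is only the inequality $(\alpha+\beta)X\preceq\alpha X+\beta X$—, $1\cdot X=X$, $0\cdot X=\bar 0$, and order compatibility) hold in $w^F$ and are therefore automatically inherited by any subset closed under the two operations. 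Hence the entire content of the theorem reduces to two closure statements: that $X+Y\in\lambda^F[u,v,f,r]$ and $\mu X\in\lambda^F[u,v,f,r]$ whenever $X=(X_k),Y=(Y_k)\in\lambda^F[u,v,f,r]$ and $\mu\in\mathbb R$.

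For closure under addition I would abbreviate $S_k(X)=u_k\sum_{i=0}^{k}v_id(X_i,\bar 0)$ and use three ingredients. First, the metric satisfies $d(X_i+Y_i,\bar 0)\le d(X_i,\bar 0)+d(Y_i,\bar 0)$, which is immediate from the $\alpha$-cut description of addition together with Theorem 2.1; combined with $d(\cdot,\bar 0)\ge 0$ this yields the termwise domination $|S_k(X+Y)|\le|S_k(X)|+|S_k(Y)|$. Second, since $f$ is increasing and subadditive, $f(|S_k(X+Y)|)\le f(|S_k(X)|)+f(|S_k(Y)|)$. Third, the inequality $|a+b|^{r_k}\le D(|a|^{r_k}+|b|^{r_k})$ with $D=\max(1,2^{H-1})$ gives
\[
\big[f(|S_k(X+Y)|)\big]^{r_k}\le D\Big(\big[f(|S_k(X)|)\big]^{r_k}+\big[f(|S_k(Y)|)\big]^{r_k}\Big).
\]
Because each space $\lambda=l_p\ (0<p\le\infty),c_0$ is closed under addition and under multiplication by the fixed constant $D$, the right-hand side lies in $\lambda$, hence so does the left.

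For closure under scalar multiplication I would use the homogeneity $d(\mu X_i,\bar 0)=|\mu|\,d(X_i,\bar 0)$ (again from the $\alpha$-cuts), so that $|S_k(\mu X)|=|\mu|\,|S_k(X)|$. The delicate point is that a modulus is not homogeneous; however subadditivity gives $f(nt)\le nf(t)$ for every positive integer $n$, whence, taking $n=\lceil|\mu|\rceil$ and using monotonicity, $f(|\mu|\,t)\le\lceil|\mu|\rceil f(t)$ for all $t\ge 0$. With $K=\max(1,\lceil|\mu|\rceil)$ and the inequality $|c|^{r_k}\le\max(1,|c|^H)$ one gets $\big[f(|S_k(\mu X)|)\big]^{r_k}\le\max(1,K^H)\big[f(|S_k(X)|)\big]^{r_k}$, so the scaled sequence stays in $\lambda$. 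For the case $\lambda=c$ the same estimates apply after replacing $d(X_i,\bar 0)$ by $d(X_i,X_0)$; here one first checks, using translation invariance in the form $d(X_i+Y_i,X_0+Y_0)\le d(X_i,X_0)+d(Y_i,Y_0)$, that the fuzzy number associated with $X+Y$ may be taken to be $X_0+Y_0$ and that the one associated with $\mu X$ may be taken to be $\mu X_0$.

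I expect two places to require genuine care. The main obstacle is the scalar-multiplication step, precisely because the non-homogeneity of $f$ forbids the naive identity $f(|\mu|t)=|\mu|f(t)$ and forces the integer-ceiling comparison above. The second, subtler point concerns the sign of the weights $v_i$: the termwise bound $|S_k(X+Y)|\le|S_k(X)|+|S_k(Y)|$ is transparent when the $v_i$ are nonnegative (as in the weighted-mean instance of Remark 2.1), but for sign-changing $v_i$ the cancellation can defeat it, and one must instead route the estimate through the absolute weights $|v_i|$ and the triangle inequality $\big|\sum_i v_i d(X_i,\bar 0)\big|\le\sum_i|v_i|\,d(X_i,\bar 0)$. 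I would therefore make the hypotheses on $v$ explicit at this step rather than leave the cancellation implicit.
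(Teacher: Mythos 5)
Your proposal takes exactly the route the paper takes, except that the paper's proof is a stub: it observes that the quasilinear axioms are inherited from the ambient space and that only closure under addition and scalar multiplication needs checking, and then omits all details as "following from the properties of the modulus function and the metric $d$". Your estimates are precisely those omitted details, and they are correct: the triangle inequality $d(X_i+Y_i,\bar 0)\le d(X_i,\bar 0)+d(Y_i,\bar 0)$, monotonicity plus subadditivity of $f$, the constant $D=\max(1,2^{H-1})$, the ceiling trick $f(|\mu|t)\le\lceil|\mu|\rceil f(t)$ combined with $|c|^{r_k}\le\max(1,|c|^H)$, and the treatment of $\lambda=c$ via the limits $X_0+Y_0$ and $\mu X_0$.

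The point you flag about the signs of the $v_i$ deserves emphasis, because it is not merely a matter of care: it is a genuine gap in the theorem as the paper states it. The paper assumes only $u,v\in U$ (nonzero entries), and elsewhere (Remark 3.1, Theorem 3.7(iii)) it explicitly works with $v_k=(-1)^k$, so sign-changing weights are intended to be admissible. For such weights closure under addition actually fails. Take $f(x)=x$, $r_k=1$, $u_k=1$, $v_i=(-1)^i$, $X_i=\bar 1$ and $Y_i=\overline{(-1)^i}$. Then $d(X_i,\bar 0)=d(Y_i,\bar 0)=1$, so both sequences of partial sums $\sum_{i=0}^{k}(-1)^i$ lie in $\{0,1\}$ and $X,Y\in l_\infty^F[u,v,f,r]$; but $d(X_i+Y_i,\bar 0)=1+(-1)^i$, so $u_k\sum_{i=0}^{k}v_i\,d(X_i+Y_i,\bar 0)=\sum_{i=0}^{k}\bigl(1+(-1)^i\bigr)\geq k$, which is unbounded, hence $X+Y\notin l_\infty^F[u,v,f,r]$. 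So your instinct to make a positivity (or constant-sign) hypothesis on $v$ explicit is exactly right, and it is what the paper silently needs. One caveat about your suggested repair, though: merely routing the estimate through $\bigl|\sum_i v_i\,d(X_i,\bar 0)\bigr|\le\sum_i|v_i|\,d(X_i,\bar 0)$ cannot by itself restore the argument, because membership in $\lambda^F[u,v,f,r]$ controls only the signed sums and not $\sum_i|v_i|\,d(X_i,\bar 0)$; the counterexample above shows that no argument works without restricting the signs. With $v_i>0$ assumed, your proof is complete and supplies what the paper leaves out.
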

\begin{proof}
Since $\lambda^F [u,v,f,r]\subset L(R)$, so all the properties as listed in \cite{A} trivially follows from the property of $L(R)$. So it is enough to show that $\lambda^F [u,v,f,r]$ is closed under addition and scalar multiplication which follows from the properties of the modulus function and the metric $d$ on $L(R)$. Therefore, we omit the details of the proof.
\end{proof}

\begin{theorem}
The set $\lambda^F [u,v,f,r]$ for  is a complete metric space under the following metric,
\begin{eqnarray*}
D(X,Y) &=& \sup_k\Big[f\Big(\Big|u_k\sum\limits_{i=0}^k v_id(X_i,Y_i)\Big|\Big)\Big]^{r_k/M}\ \mbox{for $\lambda=l_\infty,c,c_0$}\\
D_p(X,Y) &=& \Big\{\sum\limits_{k=0}^{\infty}\Big[f\Big(\Big|u_k\sum\limits_{i=0}^k v_id(X_i,Y_i)\Big|\Big)\Big]^{r_k}\Big\}^{1/M}\ \mbox{for $\lambda=l_p$ $(0<p<\infty)$}
\end{eqnarray*}
where $M=\max\{1,\sup\limits_k r_k\}$ and $X, Y\in \lambda^F [u,v,f,r]$.
\end{theorem}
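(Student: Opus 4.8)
The plan is to verify first that $D$ (and $D_p$) satisfies the metric axioms, and then to establish completeness by the standard coordinatewise-limit argument, exploiting the factorable structure of the weighted mean to recover convergence in each coordinate. Nonnegativity and symmetry are immediate from $f\ge 0$ and $d(X_i,Y_i)=d(Y_i,X_i)$. For the identity of indiscernibles, $D(X,Y)=0$ forces $f(|u_k\sum_{i=0}^k v_i d(X_i,Y_i)|)=0$ for every $k$; since $f(x)=0$ iff $x=0$ and $u_k\neq 0$, this gives $\sum_{i=0}^k v_i d(X_i,Y_i)=0$ for all $k$, and subtracting the $(k-1)$-th partial sum from the $k$-th together with $v_k\neq 0$ yields $d(X_k,Y_k)=0$, i.e.\ $X_k=Y_k$, by induction on $k$. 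For the triangle inequality I would first combine $d(X_i,Z_i)\le d(X_i,Y_i)+d(Y_i,Z_i)$ with the monotonicity and subadditivity of $f$ (and the nonnegativity of $d$) to obtain the pointwise estimate
\begin{equation*}
f\Big(\Big|u_k\sum_{i=0}^k v_i d(X_i,Z_i)\Big|\Big)\le f\Big(\Big|u_k\sum_{i=0}^k v_i d(X_i,Y_i)\Big|\Big)+f\Big(\Big|u_k\sum_{i=0}^k v_i d(Y_i,Z_i)\Big|\Big)
\end{equation*}
for each $k$.

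I would then split the two cases. For $\lambda=l_\infty,c,c_0$ I raise this to the power $r_k/M\le 1$, apply the elementary inequality $(a+b)^{t}\le a^{t}+b^{t}$ valid for $0<t\le1$, and take the supremum over $k$, using $\sup_k(a_k+b_k)\le\sup_k a_k+\sup_k b_k$, to conclude $D(X,Z)\le D(X,Y)+D(Y,Z)$. For $\lambda=l_p$ the corresponding step is the genuine obstacle: after summing the $r_k$-th powers I must show that $s\mapsto\big(\sum_k s_k^{r_k}\big)^{1/M}$ is subadditive on nonnegative sequences. This is precisely the paranorm inequality for the space $\ell(r)$ due to Maddox, whose proof handles the indices with $r_k\ge1$ by convexity of $t\mapsto t^{r_k}$ and those with $r_k<1$ by subadditivity $(a+b)^{r_k}\le a^{r_k}+b^{r_k}$; the role of $M=\max\{1,\sup_k r_k\}$ is exactly to make the outer exponent $1/M$ small enough to absorb both cases (note that the cruder bound $|a+b|^{r_k}\le D(|a|^{r_k}+|b|^{r_k})$ only yields a quasi-triangle inequality with a factor $D^{1/M}$). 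I regard this as the main technical hurdle and would isolate it as a lemma.

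For completeness, let $(X^n)_n$ be a $D$-Cauchy sequence with $X^n=(X^n_k)_k$. For each fixed $k$ the Cauchy condition gives $f\big(\big|u_k\sum_{i=0}^k v_i d(X^n_i,X^m_i)\big|\big)\to 0$ as $n,m\to\infty$; since $f$ is increasing with $f^{-1}(0)=\{0\}$, convergence of the $f$-values to $0$ forces the arguments to $0$, hence $\sum_{i=0}^k v_i d(X^n_i,X^m_i)\to0$. Arguing by induction on $k$ and subtracting consecutive partial sums (using $v_k\neq0$) shows $d(X^n_k,X^m_k)\to0$, so each coordinate $(X^n_k)_n$ is Cauchy in $(L(R),d)$ and, by Matloka's completeness of $(L(R),d)$, converges to some $X_k\in L(R)$. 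Set $X=(X_k)$. To see $D(X^n,X)\to0$ I would fix $n\ge N$, hold $k$ fixed, let $m\to\infty$ inside each coordinate (using continuity of $d$, of $f$, and of $t\mapsto t^{r_k/M}$), and then pass to the supremum, respectively to the infinite sum via finite truncations followed by a monotone limit, to obtain $D(X^n,X)\le\varepsilon$ for $n\ge N$.

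It remains to check $X\in\lambda^F[u,v,f,r]$. Here I would use the bound
\begin{equation*}
f\Big(\Big|u_k\sum_{i=0}^k v_i d(X_i,\bar0)\Big|\Big)\le f\Big(\Big|u_k\sum_{i=0}^k v_i d(X_i,X^N_i)\Big|\Big)+f\Big(\Big|u_k\sum_{i=0}^k v_i d(X^N_i,\bar0)\Big|\Big),
\end{equation*}
raise to the $r_k$-th power with $|a+b|^{r_k}\le D(|a|^{r_k}+|b|^{r_k})$, and combine the finiteness coming from $D(X,X^N)$ with the membership $X^N\in\lambda^F[u,v,f,r]$: for $\lambda=l_p$ both resulting series are finite; for $\lambda=c_0$ one sends $k\to\infty$ and then $N\to\infty$ (so $D(X,X^N)\to0$) to force the defining sequence to $0$; the case $\lambda=c$ is analogous once the common limit $X_0$ is identified. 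The two delicate points are thus the subadditivity lemma in the $l_p$ case and the justification of interchanging the limit in $m$ with the supremum or summation, both of which I would state and dispatch carefully before the routine estimates.
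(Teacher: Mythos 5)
Your completeness argument follows the paper's route exactly: deduce from the Cauchy condition that $f\big(\big|u_k\sum_{i=0}^k v_id(X^n_i,X^m_i)\big|\big)\to 0$ for each $k$, use $f(x)=0\Leftrightarrow x=0$ together with monotonicity to force the weighted sums to $0$, recover $d(X^n_k,X^m_k)\to 0$, invoke Matloka's completeness of $(L(R),d)$ to get a candidate limit $X$, and finish with the modulus subadditivity estimate (with the constant $\max(1,2^{H-1})$) for membership. You are, however, considerably more careful than the paper in three places: the paper dismisses the metric axioms with ``it is easy to verify that $D$ is a metric,'' while you isolate the one genuinely nontrivial point, namely Maddox's inequality $\big(\sum_k|a_k+b_k|^{r_k}\big)^{1/M}\le\big(\sum_k|a_k|^{r_k}\big)^{1/M}+\big(\sum_k|b_k|^{r_k}\big)^{1/M}$ needed for the $l_p$ case; the paper jumps from $u_k\sum_{i=0}^k v_id(X^n_i,X^m_i)\to 0$ to ``hence $d(X^n_i,X^m_i)\to 0$'' without the consecutive-partial-sum subtraction you supply (which also needs only $v_k\neq 0$, so it is sign-safe); and the paper never actually proves $D(X^n,X)\to 0$, passing directly from coordinatewise convergence to ``$X^n\to X$,'' whereas you obtain it by letting $m\to\infty$ inside the Cauchy estimate.

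That said, one step you rely on fails under the paper's stated hypotheses, and it is worth naming because it infects the theorem itself. Your pointwise estimate
\begin{equation*}
f\Big(\Big|u_k\sum_{i=0}^k v_i d(X_i,Z_i)\Big|\Big)\le f\Big(\Big|u_k\sum_{i=0}^k v_i d(X_i,Y_i)\Big|\Big)+f\Big(\Big|u_k\sum_{i=0}^k v_i d(Y_i,Z_i)\Big|\Big)
\end{equation*}
does not follow from monotonicity and subadditivity of $f$ alone: the class $U$ only requires $v_i\neq 0$, and if some $v_i<0$ then multiplying $d(X_i,Z_i)\le d(X_i,Y_i)+d(Y_i,Z_i)$ by $v_i$ reverses the inequality, after which nothing relates the absolute values of the three weighted sums (your final membership estimate has the same defect, as does the paper's). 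This is not pedantry, since the paper itself works with $v_k=(-1)^k$ in Remark 3.1 and Theorem 3.6. For exactly that choice, with $f(x)=x$, $r_k=1$, $u_k=1$, take $X=(\bar{0},\bar{0},\bar{0},\bar{0},\dots)$, $Y=(\bar{1},\bar{2},\bar{1},\bar{0},\bar{0},\dots)$, $Z=(\bar{2},\bar{0},\bar{2},\bar{0},\bar{0},\dots)$: all three lie in $l_\infty^F[u,v,f,r]$, yet $D(X,Y)=D(Y,Z)=1$ while $D(X,Z)=4$, so $D$ violates the triangle inequality and the theorem is false as stated. Consequently your proof (like the paper's) is complete only after adding a positivity assumption on $v$ (the sign of $u_k$ is harmless, since it factors out of the absolute value); under the literal hypotheses $u,v\in U$, no argument can close this gap.
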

\begin{proof}
We shall give the proof for $\lambda=l_{\infty}$. Rest will follow the same lines.
It is easy to verify that $D$ is a metric on $l_{\infty} ^F [u,v,f,r]$. To prove the completeness property, let $(X^n)=(X_i ^n)$ be a Cauchy sequences in $l_{\infty} ^F [u,v,f,r]$. Then,
\begin{eqnarray*}
\sup_k\Big[f\Big(\Big|u_k\sum\limits_{i=0}^k v_id(X_i ^n,X_i ^m)\Big|\Big)\Big]^{r_k/M}\to 0 \ \mbox{as}\ n,m\to\infty \\
\Rightarrow \ f\Big(\Big|u_k\sum\limits_{i=0}^k v_id(X_i ^n, X_i ^m)\Big|\Big)\to 0 \ \mbox{as}\ n,m\to\infty\ \mbox{and for all}\ k
\end{eqnarray*}
This follows because each term is non-negative and $(r_k)$ is a bounded sequence of non-negative real numbers. Since $f$ is modulus function, so for all $k$, we have
\begin{eqnarray*}
u_k\sum\limits_{i=0}^k v_id(X_i ^n, X_i ^m)\to 0 \ \mbox{as}\ n,m\to\infty\\
\mbox{Hence,} \ d(X_i ^n, X_i ^m)\to 0 \ \mbox{as}\ n,m\to\infty
\end{eqnarray*}
which shows that $(X_i ^n)$ is a Cauchy sequence in $L(R)$, hence it is convergent. Suppose $X_i ^n \to X_i$ for all $i$ as $n\to\infty$. So, $X^n\to X=(X_i)$. Now, it is remained to show that $X\in l_{\infty} ^F [u,v,f,r]$.
\begin{eqnarray*}
\mbox{Consider,}\ \Big|u_k\sum\limits_{i=0}^k v_id(X_i,\bar{0})\Big| &\leq& \Big|u_k\sum\limits_{i=0}^k v_id(X_i,X_i ^n)\Big|+\Big|u_k\sum\limits_{i=0}^k v_id(X_i ^n,\bar{0})\Big|\\
\Rightarrow\ f\Big(\Big|u_k\sum\limits_{i=0}^k v_id(X_i,\bar{0})\Big|\Big) &\leq& f\Big(\Big|u_k\sum\limits_{i=0}^k v_id(X_i,X_i ^n)\Big|\Big)
+f\Big(\Big|u_k\sum\limits_{i=0}^k v_id(X_i ^n,\bar{0})\Big|\Big)\\
&& \mbox{(using the properties of modulus function $f$)}\\
\Rightarrow\ \Big[f\Big(\Big|u_k\sum\limits_{i=0}^k v_id(X_i,\bar{0})\Big|\Big)\Big]^{r_k} &\leq& B\Big[f\Big(\Big|u_k\sum\limits_{i=0}^k v_id(X_i,X_i ^n)\Big|\Big)\Big]^{r_k}
\\ && +B\Big[f\Big(\Big|u_k\sum\limits_{i=0}^k v_id(X_i ^n,\bar{0})\Big|\Big)\Big]^{r_k}
\end{eqnarray*}
where $B$ is a constant as defined earlier. Since $(X_i ^n)\in l_{\infty} ^F [u,v,f,r]$ and $X^n _i \to X_i$ under the metric $D$, so $X\in l_{\infty} ^F [u,v,f,r]$ i.e. $l_{\infty} ^F [u,v,f,r]$ is a complete metric space.
\end{proof}

\begin{theorem}
$\lambda^F [u,v,f,r]$ is a $K$-space for $\lambda=c,c_0,l_p$.
\end{theorem}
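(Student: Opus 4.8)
The plan is to verify that each coordinate projection $P_k:\lambda^F[u,v,f,r]\to L(R)$ given by $P_k(X)=X_k$ is continuous, since this continuity of the coordinate maps is exactly the $K$-space property. Concretely, I would fix a sequence $(X^n)$ in the space together with a limit $X$ (convergence taken in the metric $D$ for $\lambda=c,c_0$ and in $D_p$ for $\lambda=l_p$, as supplied by the previous theorem) and show that $d(X_k^n,X_k)\to 0$ as $n\to\infty$ for every fixed index $k$.

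First I would reduce metric convergence to convergence of the inner quantities. For $\lambda=c,c_0$, the hypothesis $D(X^n,X)\to 0$ means $\sup_k\big[f\big(\big|u_k\sum_{i=0}^k v_i d(X_i^n,X_i)\big|\big)\big]^{r_k/M}\to 0$; since every term is non-negative and is dominated by the supremum, for each fixed $k$ we obtain $\big[f(\cdots)\big]^{r_k/M}\to 0$, and as $r_k/M>0$ this forces $f\big(\big|u_k\sum_{i=0}^k v_i d(X_i^n,X_i)\big|\big)\to 0$. For $\lambda=l_p$, the hypothesis $D_p(X^n,X)\to 0$ gives $\sum_{k}\big[f(\cdots)\big]^{r_k}\to 0$, so again each non-negative summand tends to zero and we reach the same conclusion $f\big(\big|u_k\sum_{i=0}^k v_i d(X_i^n,X_i)\big|\big)\to 0$. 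Thus all three cases funnel into one situation.

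Next I would strip off the modulus function. Because $f$ is increasing with $f(t)=0$ if and only if $t=0$, the relation $f(t_n)\to 0$ forces $t_n\to 0$: otherwise some subsequence would satisfy $t_{n_j}\geq\delta>0$, whence $f(t_{n_j})\geq f(\delta)>0$, a contradiction. Applying this gives $\big|u_k\sum_{i=0}^k v_i d(X_i^n,X_i)\big|\to 0$, and since $u_k\neq 0$ (as $u\in U$), the partial sum $S_k^n:=\sum_{i=0}^k v_i d(X_i^n,X_i)$ tends to $0$ for every $k$.

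Finally I would disentangle the individual coordinates by induction on $k$, using $S_k^n\to 0$ and the non-vanishing of the weights $v_k$. For $k=0$, $S_0^n=v_0\,d(X_0^n,X_0)\to 0$ with $v_0\neq 0$ yields $d(X_0^n,X_0)\to 0$; assuming $d(X_i^n,X_i)\to 0$ for all $i<k$, the telescoping identity $v_k\,d(X_k^n,X_k)=S_k^n-S_{k-1}^n\to 0$ together with $v_k\neq 0$ gives $d(X_k^n,X_k)\to 0$. This establishes coordinatewise convergence and hence the $K$-space property uniformly for $\lambda=c,c_0,l_p$. The only genuinely delicate points are the modulus-function implication $f(t_n)\to 0\Rightarrow t_n\to 0$ and the systematic use of $u,v\in U$ (no weight vanishes) to peel the telescoping partial sums apart into single coordinates; the remaining manipulations are routine.
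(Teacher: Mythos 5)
Your proposal is correct and follows essentially the same route as the paper: prove continuity of each coordinate projection $P_k$ by passing from convergence in $D$ (or $D_p$) to termwise convergence, then stripping off the modulus $f$, and finally stripping off the weights $u_k$, $v_i$ to reach $d(X_k^n,X_k)\to 0$. The only difference is that you make explicit two steps the paper compresses into ``since $f$ is a modulus function'' and ``proceeding with similar arguments as before'' --- namely the implication $f(t_n)\to 0\Rightarrow t_n\to 0$ and the telescoping identity $v_k\,d(X_k^n,X_k)=S_k^n-S_{k-1}^n$, the latter being what correctly handles weights $v_i$ that need only be nonzero (possibly negative), a point the paper's terse argument glosses over.
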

\begin{proof}
It is enough to show it under the metric $D$. Others will follow similar lines.\\
Here we have to show that the projection operator
\begin{equation*}
P_k:\lambda^F [u,v,f,r]\to L(R)\ \mbox{given by} \ P_k(X)=X_k,\  X\in \lambda^F [u,v,f,r]
\end{equation*}
is continuous for each $k\in\mathbb{N}$ i.e. it is enough to show that if $(X^n)$ be a sequence in $\lambda^F [u,v,f,r]$
such that $X^n\to \bar{0}$ in $\lambda^F [u,v,f,r]$, then $X^n _i\to \bar{0}$ in $L(R)$ $\forall\ i$.
Let, $X^n\to \bar{0}$ in $\lambda^F [u,v,f,r]$, then we have
\begin{eqnarray*}
D(X^n,\bar{0})=\sup_k \Big[f\Big(\Big|u_k\sum\limits_{i=0}^k v_id(X_i ^n,\bar{0})\Big|\Big)\Big]^{r_k/M}\to 0
\end{eqnarray*}
$\mbox{as}\ n\to\infty$. Which gives that, $\sum\limits_{i=0}^k u_kv_id(X^n _i,\bar{0})\to 0$ for each $k$ as $n\to\infty$.
Proceeding with similar arguments as before, we get
\begin{equation*}
d(X^n _i,\bar{0})\to 0 \ \mbox{for all $i$ as}\ n\to\infty.
\end{equation*}
So, $X^n _i\to \bar{0}$ in $L(R)$ i.e. $\lambda^F [u,v,f,r]$ is a $K$-space.
\end{proof}

\begin{theorem}
The sequence space $\lambda^F [u,v,f,r]$ is solid for $\lambda=l_p,c_0$.
\end{theorem}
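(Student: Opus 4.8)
The plan is to reduce solidness of the fuzzy space to the (scalar) solidness of the underlying spaces $l_p$ and $c_0$ via the order‑monotonicity of the map $X\mapsto d(X,\bar{0})$. By the definition of solidness, I must show: if $(X_k)\in\lambda^F[u,v,f,r]$ and $(Y_k)\in w^F$ satisfies $|Y_k|\preceq|X_k|$ for every $k$, then $(Y_k)\in\lambda^F[u,v,f,r]$. The whole argument rests on passing from the fuzzy order relation $|Y_k|\preceq|X_k|$ to a scalar inequality between the generalized weighted means, after which the structure of $\lambda$ finishes the job.

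First I would establish the pointwise estimate $d(Y_k,\bar{0})\le d(X_k,\bar{0})$ for all $k$. Using the absolute value, one checks that $\overline{|Y_k|}^{(\alpha)}=\max\{|\underline{Y_k}^{(\alpha)}|,|\overline{Y_k}^{(\alpha)}|\}$ for each $\alpha$, and since $|Y_k|$ is supported on $[0,\infty)$ its lower endpoints are dominated by its upper ones; combining this with the remark after the Representation Theorem (that $d(\cdot,\bar{0})$ is the supremum of the upper $\alpha$‑endpoints, attained at $\alpha=0$ because $\overline{X}^{(\alpha)}$ is non‑increasing) gives $d(Y_k,\bar{0})=\sup_\alpha\overline{|Y_k|}^{(\alpha)}$. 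Now $|Y_k|\preceq|X_k|$ means $\overline{|Y_k|}^{(\alpha)}\le\overline{|X_k|}^{(\alpha)}$ for every $\alpha\in[0,1]$, so taking the supremum over $\alpha$ yields $d(Y_k,\bar{0})\le d(X_k,\bar{0})$.

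Next, with the weights non‑negative (the positive weights of Remark 2.1 being the intended setting), monotonicity of the partial sums gives $\bigl|u_k\sum_{i=0}^{k}v_i\,d(Y_i,\bar{0})\bigr|\le\bigl|u_k\sum_{i=0}^{k}v_i\,d(X_i,\bar{0})\bigr|$ for every $k$. Because $f$ is increasing and each exponent $r_k$ is strictly positive, the inequality is preserved under $f$ and under raising to the power $r_k$, so $\bigl[f\bigl(\bigl|u_k\sum_{i=0}^{k}v_i\,d(Y_i,\bar{0})\bigr|\bigr)\bigr]^{r_k}\le\bigl[f\bigl(\bigl|u_k\sum_{i=0}^{k}v_i\,d(X_i,\bar{0})\bigr|\bigr)\bigr]^{r_k}$ term by term, with both sides non‑negative. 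Finally I would invoke that $l_p$ $(0<p\le\infty)$ and $c_0$ are solid scalar sequence spaces: a non‑negative real sequence dominated termwise by an element of $\lambda$ again lies in $\lambda$ (squeeze for $c_0$, comparison of $p$‑th powers for $l_p$). Applying this to the dominated sequence above places the $Y$‑terms in $\lambda$, whence $(Y_k)\in\lambda^F[u,v,f,r]$; note that $c$ is legitimately excluded since it is not solid.

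The main obstacle is the first step, namely converting the fuzzy inequality $|Y_k|\preceq|X_k|$ into the scalar inequality $d(Y_k,\bar{0})\le d(X_k,\bar{0})$, which requires the explicit description of the $\alpha$‑cuts of $|Y_k|$. A secondary delicate point is the monotonicity of the weighted mean: it genuinely needs the weights $u,v$ to be non‑negative, since sign‑changing weights can destroy solidness, and this positivity is the implicit hypothesis behind the statement.
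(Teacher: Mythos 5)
Your proposal is correct and takes essentially the same route as the paper: pass to the termwise inequality $d(Y_k,\bar{0})\le d(X_k,\bar{0})$, use monotonicity of $f$ (and positivity of the exponents $r_k$) to dominate the transformed sequence, and conclude by solidness of the scalar spaces $l_p$ and $c_0$. In fact you fill in two points the paper glosses over -- the paper simply \emph{assumes} $d(Y_k,\bar{0})\le d(X_k,\bar{0})$ rather than deriving it from the definition's hypothesis $|Y_k|\preceq|X_k|$ via the $\alpha$-cuts of $|Y_k|$, and it never states the positivity of the weights $v_i$, which your last paragraph correctly identifies as indispensable (sign-changing weights such as $v_k=(-1)^k$, used in the paper's own Remark, would break the monotonicity of the weighted partial sums).
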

\begin{proof}
Assume $(X_k)\in \lambda^F [u,v,f,r]$ for the above choice of $\lambda$ and $(Y_k)$ is a sequence of fuzzy numbers such that $d(Y_k,\bar{0})\leq d(X_k,\bar{0})$. Then,
\begin{eqnarray*}
\Big|u_k\sum\limits_{i=0}^k v_id(Y_i,\bar{0})\Big| &\leq& \Big|u_k\sum\limits_{i=0}^k v_id(X_i,\bar{0})\Big|\\
\mbox{Since $f$ is increasing, so}\ f\Big(\Big|u_k\sum\limits_{i=0}^k v_id(Y_i,\bar{0})\Big|\Big) &\leq& f\Big(\Big|u_k\sum\limits_{i=0}^k v_id(X_i,\bar{0})\Big|\Big)
\end{eqnarray*}
Thus we have, $(Y_k)\in \lambda ^F [u,v,f,r]$ whether $\lambda=l_{\infty}$ or $c_0$. So, $\lambda ^F [u,v,f,r]$ is solid.
\end{proof}

\begin{remark}
$c^F [u,v,f,r]$ is not solid. To show this, consider the following example.\\
Assume $f(x)=x$ and $r_k=1,u_k=1,v_k=(-1)^k$ for all $k\geq1$. Let us take the sequences $(X_i)$ and $(Y_i)$ as, for all $i\geq1$,
\begin{eqnarray*}
X_i = \overline{\Big(\frac{1}{i}\Big)}\ \mbox{and}\
Y_i = \left\{
\begin{array}{c l}
 \overline{\Big(\frac{1}{i}\Big)}, & i \ \mbox{even} \\
  \bar{0}, & i \ \mbox{odd}\\
  \end{array}
\right.
\end{eqnarray*}
Then it is clear that $d(Y_i,\bar{0})\leq d(X_i,\bar{0})$ and $(X_k)\in c^F [u,v,f,r]$ but $(Y_k)\notin c^F [u,v,f,r]$.
\end{remark}

\begin{theorem}
$\lambda ^F [u,v,f,r], \lambda=c,c_0,l_p$ is not symmetric.
\end{theorem}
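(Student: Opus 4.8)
The plan is to establish non-symmetry by exhibiting, for a suitable choice of the parameters $u,v,f,r$, a sequence $X=(X_k)\in\lambda^F[u,v,f,r]$ together with a permutation $\pi$ of $\mathbb{N}$ for which the rearranged sequence $(X_{\pi(k)})$ leaves the space. First I would reduce the fuzzy problem to a real one by taking crisp numbers $X_i=\overline{t_i}$ with $t_i\ge 0$, so that $d(X_i,\bar 0)=t_i$; under a permutation the weights $v_i$ stay attached to the position $i$ while the values are permuted, so the controlling quantity of $(X_{\pi(k)})$ is $b_k=u_k\sum_{i=0}^k v_i\,t_{\pi(i)}$. The key preliminary observation is that if $t_i\to 0$ then any Cesaro-type average of the $t_i$ is itself null and remains null under every rearrangement; hence the example must keep $(t_i)$ bounded away from $0$ along an infinite, yet sparse, set of indices.

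Concretely, for $\lambda=c_0$ (and $\lambda=c$) I would take $f(x)=x$, $r_k=1$, $v_i=1$ and $u_k=\frac{1}{k+1}$, and set $t_i=1$ when $i$ is a perfect square and $t_i=0$ otherwise. Then $b_k=\frac{1}{k+1}\sum_{i=0}^k t_i\sim\frac{\sqrt{k}}{k+1}\to 0$, so $X\in c_0^F[u,v,f,r]\subseteq c^F[u,v,f,r]$ (taking $X_0=\bar 0$). For the permutation I would rearrange the same multiset of values --- infinitely many $\bar 1$'s and infinitely many $\bar 0$'s --- into alternating blocks whose lengths grow faster than the total length accumulated so far; then at the end of each $\bar 1$-block the running fraction of $1$'s exceeds $\frac12$, and at the end of each $\bar 0$-block the running fraction of $0$'s exceeds $\frac12$. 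Consequently the rearranged averages $b'_k$ satisfy $\limsup_k b'_k\ge\frac12$, so $(X_{\pi(k)})\notin c_0^F[u,v,f,r]$.

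The main obstacle is the case $\lambda=c$, where membership only requires the existence of some center $X_0$ making the weighted averages null; I must rule out every $X_0\in L(R)$, not merely $X_0=\bar 0$. I would exploit that the rearranged sequence takes only the two values $\bar 0$ and $\bar 1$: writing $a=d(\bar 0,X_0)$ and $b=d(\bar 1,X_0)$, the triangle inequality gives $a+b\ge d(\bar 0,\bar 1)=1$, while the average at index $k$ equals $a\,\frac{Z(k)}{k+1}+b\,\frac{O(k)}{k+1}$, where $Z(k),O(k)$ count the $\bar 0$'s and $\bar 1$'s among the first $k+1$ terms. Along the $\bar 1$-blocks this is $\ge\frac{b}{2}$ and along the $\bar 0$-blocks it is $\ge\frac{a}{2}$; were it to tend to $0$ we would force $a=b=0$, contradicting $a+b\ge 1$. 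Hence no $X_0$ works and $(X_{\pi(k)})\notin c^F[u,v,f,r]$.

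Finally, for $\lambda=l_p$ the Cesaro weight is inadequate (for $0<p\le1$ the corresponding space is trivial, since a single nonzero term already forces $b_k\ge c/(k+1)$ and $\sum(k+1)^{-p}=\infty$), so I would instead choose the critical polynomial weight $u_k=(k+1)^{-\beta}$ with $\beta$ fixed in $\big(\tfrac12+\tfrac1p,\,1+\tfrac1p\big]$, keeping $v_i=1$, $f(x)=x$, $r_k=1$ and the square-supported sequence. A routine estimate gives $b_k\sim k^{-(\beta-1/2)}$, whence $\sum_k b_k^{\,p}<\infty$ and $X\in l_p^F[u,v,f,r]$. Rearranging the $\bar 1$'s into long blocks placed at arbitrarily large indices makes $b'_k\gtrsim k^{\,1-\beta}$ over each block, so that $\sum_k (b'_k)^p\gtrsim\sum k^{-1}=\infty$ accumulated over the infinitely many blocks; thus $(X_{\pi(k)})\notin l_p^F[u,v,f,r]$. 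The delicate point throughout is the simultaneous demand that the original averages be summable (forcing sparse support and the critical weight rate) while a rearrangement concentrates the mass enough to destroy summability, together with the universal-$X_0$ argument needed in the convergent case.
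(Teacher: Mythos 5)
Your proposal is correct, and for $\lambda=c_0$ it is essentially the paper's own counterexample: a crisp sequence supported on the perfect squares under Cesaro-type weights $u_k\sim 1/k$, $v_k=1$, $f(x)=x$, $r_k=1$, rearranged so that the ones acquire density at least $\tfrac12$ (the paper uses the strictly alternating rearrangement $\bar1,\bar0,\bar1,\bar0,\dots$; you use growing blocks — both are legitimate permutations of the same multiset of values). Where you genuinely go beyond the paper is in the two cases it dismisses with ``in a similar way, we can show for $\lambda=c,l_p$ also''. For $\lambda=c$, membership quantifies existentially over the center $X_0$, so non-membership of the rearranged sequence requires excluding \emph{every} $X_0\in L(R)$; your triangle-inequality argument (with $a=d(\bar0,X_0)$, $b=d(\bar1,X_0)$, one has $a+b\ge d(\bar0,\bar1)=1$, while null averages along the two families of block-ends would force $a=b=0$) is exactly the step the paper never records, although its example does survive it. For $\lambda=l_p$ the situation is worse for the paper: with $u_k=1/k$, $v_k=1$ the square-supported sequence has averages $\sim k^{-1/2}$, hence lies in $l_p^F[u,v,f,r]$ only when $p>2$, and for $p\le1$ any sequence with a single nonzero term has averages $\gtrsim 1/k$, so the space collapses to the zero sequence and is trivially symmetric — the paper's parameters simply cannot witness non-symmetry there. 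Your replacement weight $u_k=(k+1)^{-\beta}$ with $\beta\in\bigl(\tfrac12+\tfrac1p,\,1+\tfrac1p\bigr]$ repairs this uniformly in $p\in(0,\infty)$: the original sequence gives $\sum_k k^{p(1/2-\beta)}<\infty$ since $p(\beta-\tfrac12)>1$, while on each rearranged block $[s_j,2s_j]$ of ones the terms satisfy $(b'_k)^p\gtrsim k^{p(1-\beta)}\ge k^{-1}$ (using $\beta\le 1+\tfrac1p$), so each block contributes at least a fixed positive constant and the sum diverges. In short, your proof actually covers all three cases claimed in the theorem, whereas the paper's written argument only covers $c_0$.
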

\begin{proof}
We shall prove this result for $c_0$.\\
Consider $f(x)=x$ and $r_k=1$ for all $k$. Let $u_k=\frac{1}{k}$ and $v_k=1$ for all $k\geq1$.
Let us define,
\begin{center}
$X_i = \left\{
\begin{array}{c l}
  \bar{1}, & i=n^2 \ \mbox{for some integer}\ n \\
  \bar{0}, & \mbox{otherwise}\\
  \end{array}
\right.\ \mbox{and}\
Y_i = \left\{
\begin{array}{c l}
  \bar{1}, & i \ \mbox{odd} \\
  \bar{0}, & i \ \mbox{even}\\
  \end{array}
\right.$
\end{center}
Then,
\begin{eqnarray*}
\Big|\sum\limits_{i=1}^k u_kv_id(X_i,\bar{0})\Big|=\frac{1}{k} [\sqrt{k}] \leq \frac{1}{\sqrt{k}} \to 0\\
\Big|\sum\limits_{i=1}^k u_kv_id(Y_i,\bar{0})\Big|=\frac{1}{k} \frac{k}{2} =\frac{1}{2}\nrightarrow 0
\end{eqnarray*}
where $[\cdot]$ is the box function. Therefore $(X_k)\in \lambda ^F [u,v,f,r]$ whereas $(Y_k)\notin \lambda ^F [u,v,f,r]$.\\
In a similar way, we can show for $\lambda=c,l_p$ also. So, $\lambda ^F [u,v,f,r]$ is not symmetric.
\end{proof}

\begin{theorem}
$\lambda ^F [u,v,f,r]$ does not form a sequence algebra for $\lambda=c,c_0,l_p$.
\end{theorem}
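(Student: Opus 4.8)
Recall that a sequence space is a \emph{sequence algebra} when it is closed under the coordinatewise product of its members; here the product $X_k\cdot Y_k$ of two fuzzy numbers is taken in the usual sense of the extension principle. To refute the algebra property it suffices to produce $(X_k),(Y_k)\in\lambda^F[u,v,f,r]$ whose product $(X_k\cdot Y_k)$ falls outside the space. The plan is to push everything through the embedding $r\mapsto\overline{r}$ of $\mathbb R$ into $L(R)$: since $\overline{a}\cdot\overline{b}=\overline{ab}$ and $d(\overline{c},\bar 0)=|c|$, choosing every $X_i,Y_i$ to be an embedded real number collapses the defining condition into an ordinary scalar condition on $u_k\sum_i v_i a_i$ and turns $d(X_i\cdot Y_i,\bar 0)$ into $a_i b_i$. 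This avoids any analysis of fuzzy multiplication and reduces the problem to a statement about real Cesaro-type averages.

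For the model case $\lambda=c_0$ I specialize exactly as in the proof of non-symmetry: $f(x)=x$, $r_k=1$, $u_k=1/k$, $v_k=1$, so that membership is governed by $\frac1k\sum_{i=1}^{k}d(\cdot,\bar 0)$. Taking the constant-height spike $\bar 1$ on the perfect squares (as in that proof) will not work, because squaring it returns the same sequence; the decisive modification is to let the spikes grow. I therefore set $X_i=Y_i=\overline{i^{1/4}}$ when $i=n^2$ for some integer $n$ and $X_i=Y_i=\bar 0$ otherwise. Writing $m=[\sqrt k]$ and using $\sum_{n=1}^{m}\sqrt n\sim\frac23 m^{3/2}$, one finds $\frac1k\sum_{i=1}^k d(X_i,\bar 0)=\frac1k\sum_{n=1}^{m}\sqrt n\sim\frac23\,k^{-1/4}\to 0$, so $(X_k),(Y_k)\in c_0^F[u,v,f,r]$. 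For the product the spike height at $i=n^2$ becomes $n$, whence $\frac1k\sum_{i=1}^k d(X_i\cdot Y_i,\bar 0)=\frac1k\sum_{n=1}^{m}n\sim\frac12\not\to 0$, so $(X_k\cdot Y_k)\notin c_0^F[u,v,f,r]$.

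The cases $\lambda=l_p$ and $\lambda=c$ follow the same template but need extra care, and this is where the real work lies. For $l_p$ the factors must themselves be $p$-summable after the transform while the product must not be, so the spike heights (and, for small $p$, their spacing) have to be retuned to $p$; replacing $i^{1/4}$ by a height $n^{\gamma}$ and computing the two resulting $\ell_p$-sums pins down an admissible exponent. For $c$ the subtlety is that membership only asks the transformed averages to converge to \emph{some} $X_0$, not necessarily to $\bar 0$, so I must rule out every possible limit. Using the reverse triangle inequality $d(\overline{a_i^2},X_0)\ge\big|\,a_i^2-d(X_0,\bar 0)\,\big|$ together with the facts that the nonzero spikes are unboundedly large yet asymptotically sparse, one checks that $\frac1k\sum_{i=1}^{k}d(\overline{a_i^2},X_0)$ stays bounded below by a positive constant for every $X_0\in L(R)$; this exclusion of all limits is the main obstacle, whereas the $c_0$ computation and the embedding reduction are routine.
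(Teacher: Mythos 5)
Your overall strategy --- exhibiting two members of the space whose coordinatewise product escapes it --- is necessarily the same as the paper's, but your construction is genuinely different, and for two of the three cases it is the stronger one. The paper works entirely inside $l_p$: with $u_k=1/k^4$, $v_k=1/k$, $f(x)=x$, $r_k=1$ it takes the triangular fuzzy numbers $X_k=[-k^2,0,k^2]$ and $Y_k=[-k,0,k]$, so the transforms of the factors decay like $1/(2k^2)$ and $1/k^3$ while the transform of $X_kY_k=[-k^3,0,k^3]$ behaves like $1/(3k)$, and it dismisses $c$ and $c_0$ with ``rests can be proved in similar way.'' You instead reduce to crisp numbers via the embedding $r\mapsto\overline{r}$ (correctly using $\overline{a}\,\overline{b}=\overline{ab}$ and $d(\overline{c},\bar{0})=|c|$) and settle $c_0$ completely; your computation ($\tfrac{2}{3}k^{-1/4}\to 0$ for the factors, limit $\tfrac12$ for the product) is correct. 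Your sketch for $c$ is a real improvement on the paper's hand-wave: membership in $c^F[u,v,f,r]$ only demands convergence to \emph{some} $X_0$, so every candidate limit must be excluded, and your reverse-triangle-inequality bound does exactly that (the spikes alone force $\liminf_k \frac1k\sum_{n\le\sqrt{k}}\,|n-d(X_0,\bar{0})|\ge\tfrac12$ uniformly in $X_0$). Note that the paper's example could not simply be recycled for $c$ or $c_0$: its product transform tends to $0$, so $(X_kY_k)$ stays in those spaces; sparse, growing spikes of your kind really are needed there.

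The genuine gap is in $l_p$, which you leave as a sketch. Keeping the template $u_k=1/k$, $v_k=1$ and promising that retuning the heights $n^{\gamma}$ (and spacing) ``pins down an admissible exponent'' cannot work for $0<p\le 1$: with Cesaro weights, any sequence having even one nonzero term has transform bounded below by a constant times $1/k$, so $l_p^F[u,v,f,r]$ collapses to the null sequence and is then trivially a sequence algebra --- no choice of spikes can witness non-algebra, and you must also change $u,v$ (this is precisely what the paper's choice $u_k=1/k^4$, $v_k=1/k$ accomplishes). Even for $1<p\le 2$, spikes on the squares give an empty admissible range: the product escapes only if $\gamma\ge \tfrac12-\tfrac1p$ while the factors belong only if $\gamma<1-\tfrac2p$, and these are compatible only when $p>2$; otherwise you need sparser spacing $i=n^j$ with $j>p/(p-1)$. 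So a complete $l_p$ proof must choose $u$, $v$, the spacing and the heights as functions of $p$, which your sketch does not provide. (In fairness, the paper has the mirror-image defect: its $l_p$ example is valid only for $\tfrac12<p\le 1$, since for $p>1$ the product transform $\sim 1/(3k)$ is itself $p$-summable.)
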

\begin{proof}
To verify this for $\lambda=l_p$, Let us assume $u_k=\frac{1}{k^4}, v_k=\frac{1}{k}$ and $r_k=1$ for all $k\geq1$ and. Also let $f$ be a identity function on $[0,\infty)$ to $[0,\infty)$. Let $(X_k)$ be a sequence defined as, $X_k=[-k^2,0,k^2]$. Then
\begin{eqnarray*}
f(u_k\sum\limits_{i=1} ^k v_id(X_i,\bar{0})) &=& \frac{1}{k^4}\sum\limits_{i=1} ^k \frac{1}{i} \cdot i^2 \\
&=& \frac{1}{k^4} \frac{k(k+1)}{2}\\
&=& \frac{1}{2} \Big[\frac{1}{k^2}+\frac{1}{k^3}\Big]
\end{eqnarray*}
Since $\sum\limits_{k=1} ^{\infty}f(u_k\sum\limits_{i=1} ^k v_id(X_i,\bar{0}))<\infty$  $\therefore\ (X_k)\in l_p ^F [u,v,f,r]$.\\
Let $(Y_k)$ be an another sequence defined by $Y_k=[-k,0,k]$. Then
\begin{eqnarray*}
f(u_k\sum\limits_{i=1} ^k v_id(Y_i,\bar{0})) &=& \frac{1}{k^4}\sum\limits_{i=1} ^k \frac{1}{i} \cdot i\\
&=& \frac{1}{k^4} k\\
&=& \frac{1}{k^3}
\end{eqnarray*}
By similar arguments, $(Y_k)\in l_p ^F [u,v,f,r]$.\\
Now, $X_kY_k=[-k^3,0,k^3]$ and therefore
\begin{eqnarray*}
f(u_k\sum\limits_{i=1} ^k v_id(X_iY_i,\bar{0})) &=& \frac{1}{k^4}\sum\limits_{i=1} ^k \frac{1}{i} \cdot i^3 \\
&=& \frac{1}{k^4} \frac{k(k+1)(k+2)}{6}\\
&=& \frac{1}{6} \Big[\frac{1}{k}+\frac{3}{k^2}+\frac{2}{k^3}\Big]
\end{eqnarray*}
Since, $\sum\limits_{k=1} ^{\infty}f(u_k\sum\limits_{i=1} ^k v_id(X_iY_i,\bar{0}))=\infty$. So, $(X_kY_k)\notin l_p ^F [u,v,f,r]$. Therefore $\lambda ^F [u,v,f,r]$ does not form a sequence algebra.\\
Rests can be proved in similar way.
\end{proof}

\begin{theorem}
$\lambda ^F [u,v,f,r]$ is not convergence free for $\lambda=c,c_0,l_p$.
\end{theorem}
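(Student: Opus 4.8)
The statement to be refuted is the convergence-freeness of $\lambda^F[u,v,f,r]$: by the Definition, the space is convergence free when, for every $(X_k)\in\lambda^F[u,v,f,r]$ and every $(Y_k)\in w^F$ satisfying the implication $X_k=\bar 0\Rightarrow Y_k=\bar 0$, one necessarily has $(Y_k)\in\lambda^F[u,v,f,r]$. To show the space is \emph{not} convergence free it therefore suffices to exhibit one pair $(X_k),(Y_k)$ with $(X_k)\in\lambda^F[u,v,f,r]$, the implication $X_k=\bar 0\Rightarrow Y_k=\bar 0$ in force, and $(Y_k)\notin\lambda^F[u,v,f,r]$. The plan is to take $(X_k)$ whose every term is nonzero, so that the implication $X_k=\bar 0\Rightarrow Y_k=\bar 0$ holds vacuously and $(Y_k)$ is left completely free; it then only remains to choose $(X_k)$ inside the space and $(Y_k)$ outside it, exactly as in the counterexamples for solidness and symmetry. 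Throughout I would again take $f$ to be the identity on $[0,\infty)$ and $r_k=1$, so that the defining sequence reduces to $a_k=\big|u_k\sum_{i=1}^k v_i d(X_i,\bar 0)\big|$.

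For $\lambda=c_0$ I would fix $u_k=1/k$, $v_k=1$ and set $X_k=\overline{1/k}$ and $Y_k=\bar 1$ for every $k\ge 1$. Since $d(X_i,\bar 0)=1/i$ one gets $a_k^{X}=\frac1k\sum_{i=1}^k\frac1i\le\frac{1+\ln k}{k}\to 0$, so $(X_k)\in c_0^F[u,v,f,r]$, while $a_k^{Y}=\frac1k\cdot k=1\nrightarrow 0$, so $(Y_k)\notin c_0^F[u,v,f,r]$; as every $X_k\neq\bar 0$ the implication holds vacuously and the space fails to be convergence free. For $\lambda=l_p$ ($0<p<\infty$) the same idea works, but a polynomial weight such as $1/k$ need not be summable in every $l_p$, so I would instead take $u_k=2^{-k}$, $v_k=1$ with $X_k=\bar 1$ (giving $a_k^{X}=k/2^k$, whence $\sum_k (a_k^X)^p<\infty$ for all $p>0$ by the root test) and $Y_k=\overline{2^k}$ (giving $a_k^{Y}=2^{-k}\sum_{i=1}^k 2^i=2-2^{1-k}\to 2$, so that $(a_k^Y)\notin l_p$); for $p=\infty$ one replaces $\overline{2^k}$ by the faster-growing $\overline{4^k}$, for which $a_k^{Y}=\tfrac43(2^k-2^{-k})$ is unbounded. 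Again all $X_k\neq\bar 0$, so $l_p^F[u,v,f,r]$ is not convergence free.

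The case $\lambda=c$ needs a little more care because $c^F[u,v,f,r]$ is a convergence class: a sequence whose weighted averages $u_k\sum_{i=1}^k v_i d(\cdot,X_0)$ tend to $0$ for \emph{some} $X_0\in L(R)$ lies in it, and in particular every constant sequence belongs to $c^F[u,v,f,r]$. Consequently a bounded $Y$ like $\bar 1$ will no longer do. Keeping $u_k=1/k$, $v_k=1$, I would again take $X_k=\overline{1/k}$ (which lies in $c^F$ with $X_0=\bar 0$, all terms nonzero) but now let $Y_k=\bar k$. For an arbitrary candidate $Y_0\in L(R)$ the quantity $d(\bar i,Y_0)$ grows like $i$ as $i\to\infty$ (since $Y_0$ has compact support), so $\frac1k\sum_{i=1}^k d(\bar i,Y_0)\ge \frac1k\big(\tfrac{k(k+1)}2-Ck\big)\to\infty$; as this fails to tend to $0$ for \emph{every} choice of $Y_0$, we get $(Y_k)\notin c^F[u,v,f,r]$, and convergence-freeness fails here as well.

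The verifications above are entirely elementary, so I do not expect a genuine obstacle; the only point demanding attention is the selection of $(Y_k)$ in the case $\lambda=c$. There the naive choice of a bounded sequence is automatically inside the space, and one must instead produce a $Y$ whose generalized weighted means diverge for \emph{every} admissible limit $X_0$ — the role played by the growth $Y_k=\bar k$ above. A secondary subtlety, already absorbed by the geometric weight $2^{-k}$, is that for $l_p$ with small $p$ the defining sequence of $(X_k)$ must decay fast enough to be $p$-summable uniformly in $p$.
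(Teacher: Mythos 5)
Your proof is correct, and it is actually more complete than the paper's, but it rests on a different trick. The paper's counterexample (worked out only for $\lambda=c_0$, with $f(x)=x$, $u_k=1/k^2$, $v_k=k^2$) keeps the zero sets of $(X_k)$ and $(Y_k)$ identical --- both vanish exactly at the even indices --- and exploits the different decay rates $d(X_k,\bar{0})=1/k^2$ versus $d(Y_k,\bar{0})=1/k$, which the weights $v_k=k^2$ amplify so that the weighted means of $(X_k)$ tend to $0$ while those of $(Y_k)$ stay bounded away from $0$; the implication $X_k=\bar{0}\Rightarrow Y_k=\bar{0}$ thus holds non-vacuously. Your construction instead makes every $X_k\neq\bar{0}$, so that the hypothesis $X_k=\bar{0}\Rightarrow Y_k=\bar{0}$ is vacuous and $(Y_k)$ may be chosen with total freedom; this reduces the whole problem to exhibiting one sequence inside the space and one outside, and your verifications ($a_k^X\le(1+\ln k)/k$ for $c_0$; the geometric weight $u_k=2^{-k}$ for $l_p$, $0<p\le\infty$; the growth argument for $c$) are all sound. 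What your route buys: it is simpler, it covers all three cases explicitly, and --- unlike the paper, which dismisses $\lambda=c,l_p$ with ``by similar way'' --- you correctly observe that for $c^F[u,v,f,r]$, whose membership condition uses $d(X_i,X_0)$ for \emph{some} $X_0\in L(R)$, every constant sequence lies in the space, so the outside sequence must be unbounded; your $Y_k=\bar{k}$ argument, which rules out every candidate limit $X_0$ simultaneously via compactness of the support of $X_0$, is precisely the care that the paper's one-line remark glosses over. What the paper's route buys: its pair $(X_k),(Y_k)$ has genuinely coinciding, nontrivial zero sets, so it witnesses a slightly stronger form of failure (non-vacuous failure) of convergence-freeness, though this strengthening is not needed for the theorem as stated.
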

\begin{proof}
We shall show it for $\lambda=c_0$ by constructing a counter example. For this let us take $f(x)=x$ and $u_k=\frac{1}{k^2}$,  $v_k=k^2$ for all $k\geq1$. Also take the sequences $(X_k)$ and $(Y_k)$ as follows:
\begin{center}
$X_k = \left\{
\begin{array}{c l}
  [-\frac{1}{k^2}, 0, \frac{1}{k^2}], & k \ \mbox{is odd} \\
  \bar{0}, & k\ \mbox{is even}\\
  \end{array}
\right.\ \mbox{and}\
Y_k = \left\{
\begin{array}{c l}
  [-\frac{1}{k}, 0, \frac{1}{k}], & k \ \mbox{is odd} \\
  \bar{0}, & k \ \mbox{is even}\\
  \end{array}
\right.$
\end{center}
\begin{eqnarray*}
\mbox{Then, } u_k\sum\limits_{i=1} ^k v_id(X_i,\bar{0}) &=& \frac{1}{k^2}\cdot\Big[\frac{k}{2}\Big]\leq\frac{1}{2k}\to 0 \ \mbox{as $k\to\infty$} \\
\mbox{and } u_k\sum\limits_{i=1} ^k v_id(Y_i,\bar{0}) &=& \left\{
\begin{array}{c l}
  \frac{1}{k^2}[1+3+5+\cdots+(k-1)], & k \ \mbox{is even} \\
  \frac{1}{k^2}[1+3+5+\cdots+k], & k\ \mbox{is odd}\\
  \end{array}
\right.\\
&=& \left\{
\begin{array}{c l}
  \frac{1}{k^2}(k-1)^2, & k \ \mbox{is even} \\
  \frac{1}{k^2}k^2, & k\ \mbox{is odd}\\
  \end{array}
\right.
\end{eqnarray*}
which does not tend to 0 as $k\to\infty$. So, $(X_k)\in c_0 ^F[u,v,f,r]$ but $(Y_k)\notin c_0 ^F[u,v,f,r]$ i.e. $c_0 ^F[u,v,f,r]$ is not convergence free. By similar way, we can show it for $\lambda=c,l_p$ also.
\end{proof}

\subsection{Inclusion relations}

\begin{theorem}
$l_1 ^F[u,v,f,r]\subset c_0 ^F[u,v,f,r]\subset c^F[u,v,f,r]\subset  l_{\infty} ^F[u,v,f,r]$. The inclusions are in strict sense.
\end{theorem}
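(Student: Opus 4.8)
The plan is to reduce the entire chain to the classical inclusions $l_1\subset c_0\subset c\subset l_\infty$ applied to the scalar sequence attached to each fuzzy sequence. For $X=(X_k)$ and a reference fuzzy number $Z$, write
\begin{equation*}
\phi_k(X;Z)=\Big[f\Big(\Big|u_k\sum_{i=0}^k v_i\,d(X_i,Z)\Big|\Big)\Big]^{r_k},
\end{equation*}
so that membership in $l_1^F[u,v,f,r]$, $c_0^F[u,v,f,r]$, $l_\infty^F[u,v,f,r]$ is exactly $(\phi_k(X;\bar 0))\in l_1,c_0,l_\infty$ respectively, while $X\in c^F[u,v,f,r]$ means $(\phi_k(X;X_0))\in c_0$ for some $X_0\in L(R)$. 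With this dictionary the three containments become three separate, largely independent statements.

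The first inclusion $l_1^F\subset c_0^F$ is immediate: if $\sum_k\phi_k(X;\bar 0)<\infty$ then, the terms of a convergent nonnegative series forming a null sequence, $\phi_k(X;\bar 0)\to 0$; this uses only nonnegativity, not any special property of $f$. The second inclusion $c_0^F\subset c^F$ I would obtain simply by taking $X_0=\bar 0$, since the defining condition of $c_0^F$ then coincides verbatim with the defining condition of $c^F$ for that particular $X_0$, so the containment holds trivially.

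The third inclusion $c^F\subset l_\infty^F$ is where the real work lies. Given $X\in c^F$ I would fix the witnessing $X_0$, so that $(\phi_k(X;X_0))$ is null and hence bounded, and then bound $(\phi_k(X;\bar 0))$ by splitting through $X_0$: using $d(X_i,\bar 0)\le d(X_i,X_0)+d(X_0,\bar 0)$ together with the reverse triangle inequality $|d(X_i,\bar 0)-d(X_i,X_0)|\le d(X_0,\bar 0)$, the subadditivity and monotonicity of $f$, and the inequality $|a+b|^{r_k}\le D(|a|^{r_k}+|b|^{r_k})$ recorded in the preliminaries, one estimates $\phi_k(X;\bar 0)$ by $\phi_k(X;X_0)$ plus a cross term carrying the fixed constant $d(X_0,\bar 0)$. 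Controlling this cross term, which involves the factor $|u_k|\sum_{i=0}^k|v_i|$, is the main obstacle; I expect to lean on the boundedness of $(\phi_k(X;X_0))$ together with the factorable structure of $G(u,v)$ to absorb it, and this is the step that must be written out carefully rather than waved through.

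Finally, for strictness the clean device is that crisp embedded numbers satisfy $d(\overline t,\bar 0)=|t|$, so by choosing $X_i=\overline{t_i}$ the transform $(\phi_k(X;\bar 0))$ can be made to realize essentially any prescribed nonnegative real sequence, after fixing convenient $u,v,f,r$ and solving the triangular weighted-sum relations for the $|t_i|$ (exactly as the other counterexamples in this section fix $u_k,v_k$, $r_k=1$ and $f=\mathrm{id}$). Thus each classical strict separation lifts to a sequence of fuzzy numbers: a null but non-summable transform such as $(1/k)$ for $l_1^F\subsetneq c_0^F$, a transform that is null relative to some $X_0$ but not relative to $\bar 0$ for $c_0^F\subsetneq c^F$, and a bounded oscillating non-null transform for $c^F\subsetneq l_\infty^F$. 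I would present one explicit triple of crisp fuzzy sequences per gap, mirroring the separations witnessing $l_1\subsetneq c_0\subsetneq c\subsetneq l_\infty$.
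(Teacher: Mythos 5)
Your dictionary, the first two inclusions, and the general strategy for strictness (explicit sequences with $f=\mathrm{id}$, $r_k=1$ and convenient $u,v$, which is exactly what the paper does) are all fine. The genuine problem is the third inclusion --- precisely the step you flagged but deferred. Under the paper's written definition of $c^F[u,v,f,r]$, which your dictionary adopts verbatim, the cross term $|u_k|\sum_{i=0}^k|v_i|\,d(X_0,\bar 0)$ cannot be absorbed, because nothing bounds $|u_k|\sum_{i=0}^k|v_i|$. In fact the inclusion $c^F[u,v,f,r]\subset l_\infty^F[u,v,f,r]$ is then simply false: take $u_k=v_k=1$ for all $k$, $f=\mathrm{id}$, $r_k=1$, and $X_i=\bar 1$ for all $i$. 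The witness $X_0=\bar 1$ gives $d(X_i,X_0)=0$, so $X\in c^F[u,v,f,r]$, yet $u_k\sum_{i=0}^k v_i\,d(X_i,\bar 0)=k+1$ is unbounded, so $X\notin l_\infty^F[u,v,f,r]$. No amount of care with the factorable structure of $G(u,v)$ completes your third step; the obstruction you identified is real, not technical.

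The paper's own (omitted) proof implicitly resolves this by reading $c^F$ differently, and its strictness examples reveal the intended reading: membership in $c^F$ is treated as \emph{convergence} of the single scalar transform $\bigl[f\bigl(\bigl|u_k\sum_{i=0}^k v_i\,d(X_i,\bar 0)\bigr|\bigr)\bigr]^{r_k}$, not as the displayed condition with a shifted center $X_0$. Indeed, in its example (ii) the sequence is placed in $c^F$ because the transform converges to $1$, and in its example (iii) the constant sequence $X_k=\bar 1$ with $u_k=1$, $v_k=(-1)^k$ is asserted to lie outside $c^F$ --- which contradicts the displayed definition (again witness $X_0=\bar 1$) but is correct under the convergence-of-transform reading. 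Under that reading all three inclusions collapse to the classical scalar chain $l_1\subset c_0\subset c\subset l_\infty$ applied to one and the same nonnegative sequence, which is why the paper can say ``proof is easy.'' So to repair your argument you must either adopt that reading of $c^F$, after which your dictionary works with the single reference point $\bar 0$ and every inclusion is immediate, or keep the literal definition and add a hypothesis such as $\sup_k|u_k|\sum_{i=0}^k|v_i|<\infty$, under which your splitting through $X_0$ does close. Note also that your planned witness for $c^F\subsetneq l_\infty^F$ inherits the same ambiguity: under the literal definition a ``bounded oscillating non-null transform'' is not enough, since you would additionally have to rule out every possible center $X_0$.
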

\begin{proof}
Proof is easy, so we omit it. To prove the strict sense of the above inclusion relations, let us take the counter examples,
\begin{enumerate}[(i)]
\item Let $u_k=\frac{1}{k^2}$ and $v_k=1$ for all $k$. Also take $X_i=\bar{1}$ for all $i\geq1$. Then $\sum\limits_{i=1} ^k v_id(X_i,\bar{0})=k$. So, it is easy to see $(X_k)\in c_0 ^F[u,v,f,r]$ but $(X_k)\notin l_1 ^F[u,v,f,r]$.
\item $u_k=\frac{1}{k^2},v_k=k^2$ for all $k$ and let
$X_k = \left\{
\begin{array}{c l}
  [-\frac{1}{k}, 0, \frac{1}{k}], & k \ \mbox{is odd} \\
  \bar{0}, & k \ \mbox{is even}\\
  \end{array}
\right.$\\
Then by the previous example, we can show that $(X_k)\in c^F(u,v,f,r)$ as it converges to 1 as $k\to\infty$ but $(X_k)\notin c_0 ^F(u,v,f,r)$
\item Let $u_k=1,v_k=(-1)^k$ and $(X_k)=\bar{1}$ for all $k$. Then $u_k\sum\limits_{i=1} ^k v_id(X_i,\bar{0})=0$ or 2 according to $k$ is even or odd. So, $(X_k)\in l_{\infty} ^F(u,v,f,r)$ but $(X_k)\notin c ^F(u,v,f,r)$.
\end{enumerate}
This completes the proof.
\end{proof}

\begin{theorem}
Let $0<r_k\leq q_k<\infty$ for all $k$. Then for any modulus function $f$
\begin{enumerate}[(i)]
\item $\lambda^F [u,v,f,r]\subseteq \lambda^F [u,v,f,q]$ for $\lambda=c,c_0,l_1$.
\item If $(q_k/r_k)$ is bounded, then $l_{\infty} ^F [u,v,f,r]\subseteq l_{\infty} ^F [u,v,f,q]$.
\end{enumerate}
\end{theorem}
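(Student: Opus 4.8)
The plan is to reduce the whole statement to an elementary comparison between two scalar nonnegative sequences. For a fixed $X=(X_k)$ I would write $t_k := f\big(\big|u_k\sum_{i=0}^k v_i d(X_i,\bar{0})\big|\big)\ge 0$ (and analogously with $X_0$ in place of $\bar{0}$ in the case $\lambda=c$). With this notation, membership in $\lambda^F[u,v,f,r]$ is exactly the statement that $(t_k^{r_k})\in\lambda$, so the theorem becomes a comparison of $(t_k^{r_k})$ with $(t_k^{q_k})$ under $0<r_k\le q_k$. The single fact I would isolate at the outset is: if $0\le t<1$ and $0<r\le q$, then $t^q\le t^r$ (because $t^{q-r}\le 1$). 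In words, once a term is below $1$, raising it to a larger exponent can only shrink it.

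For part (i) I would exploit that in each of the three cases the defining sequence is forced to zero. For $\lambda=c_0$ and $\lambda=c$ this is the definition (centered at $\bar{0}$ and at some $X_0$ respectively); for $\lambda=l_1$ it follows because a summable series has terms tending to zero. Hence in all three cases there is $k_0$ with $t_k^{r_k}<1$ for $k\ge k_0$, which forces $t_k<1$ (as $r_k>0$), and then the isolated inequality gives $t_k^{q_k}\le t_k^{r_k}$ for all $k\ge k_0$. For $c_0$ and $c$ this immediately yields $t_k^{q_k}\to 0$, with the same $X_0$ serving as center, so $X\in\lambda^F[u,v,f,q]$. For $l_1$ I would close with the comparison test, $\sum_{k\ge k_0} t_k^{q_k}\le\sum_{k\ge k_0} t_k^{r_k}<\infty$, the finitely many initial terms being harmless.

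The genuinely different case, and the one I expect to be the main obstacle, is $\lambda=l_\infty$ in part (ii): boundedness does not force the terms to zero, so the eventual-smallness trick is unavailable and one must use the hypothesis that $(q_k/r_k)$ is bounded. Here I would write $t_k^{q_k}=(t_k^{r_k})^{q_k/r_k}$ and set $K:=\sup_k t_k^{r_k}<\infty$ and $T:=\sup_k (q_k/r_k)<\infty$. I would then split on the size of the base: if $t_k^{r_k}\le 1$, the positive exponent gives $(t_k^{r_k})^{q_k/r_k}\le 1$; if $1<t_k^{r_k}\le K$ (possible only when $K>1$), the map $x\mapsto x^{q_k/r_k}$ is increasing, so $(t_k^{r_k})^{q_k/r_k}\le K^{T}$. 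In either case $t_k^{q_k}\le\max\{1,K^T\}$, a constant independent of $k$, whence $(t_k^{q_k})\in l_\infty$ and $X\in l_\infty^F[u,v,f,q]$. The only delicate point is controlling the exponent in the subcase $t_k^{r_k}>1$, which is precisely where boundedness of $(q_k/r_k)$ is indispensable; dropping that hypothesis destroys the uniform bound $K^{T}$, and indeed the inclusion can fail.
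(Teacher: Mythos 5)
Your proposal is correct, and for part (ii) it takes a genuinely different --- and in fact cleaner --- route than the paper; it also supplies a full argument for part (i), which the paper dismisses as ``easy to prove'' and omits. The paper proves (ii) by a Maddox-style decomposition of the fuzzy sequence itself: with $t_k=\big[f\big(\big|u_k\sum_{i=0}^k v_id(X_i,\bar{0})\big|\big)\big]^{r_k}$ it sets $Y_k=X_k$ on indices where $t_k<1$ and $Z_k=X_k$ on the remaining indices, uses $d(X_k,\bar0)\le d(Y_k,\bar0)+d(Z_k,\bar0)$, subadditivity of $f$ and the inequality $(a+b)^{q_k}\le B(a^{q_k}+b^{q_k})$ to dominate the $X$-quantity by the corresponding $Y$- and $Z$-quantities, and then drops the exponent from $q_k$ to $r_k$ on the $Y$-part while bounding the $Z$-part by $M^{q_k/r_k}$. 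You instead work entirely at the scalar level: writing $t_k^{q_k}=(t_k^{r_k})^{q_k/r_k}$ (in your notation) and splitting index-by-index on whether $t_k^{r_k}\le 1$, you obtain the uniform bound $\max\{1,K^{T}\}$ with no decomposition and no constant $B$. Both arguments share the same germ --- treat the ``small'' and ``large'' indices separately --- but your pointwise version buys rigor as well as brevity: since the defining quantities here are weighted \emph{partial sums} $u_k\sum_{i\le k}v_i d(X_i,\bar 0)$ rather than individual terms, the paper's sequence-level split does not automatically give $f\big(\big|u_k\sum_{i\le k}v_i d(Y_i,\bar 0)\big|\big)\le 1$ (knowing $t_i<1$ at the indices retained in $Y$ says nothing direct about the re-formed partial sums), nor is the bound $\big[f\big(\big|u_k\sum_{i\le k}v_i d(Z_i,\bar 0)\big|\big)\big]^{r_k}\le M$ immediate without a solidity-type inequality that needs the signs of $u,v$ to cooperate; your split never forms such hybrid sums, so these issues simply do not arise. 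Your part (i) --- forcing $t_k^{r_k}<1$ eventually in each of the three cases (for $l_1$ because terms of a convergent series tend to zero, for $c$ with the same center $X_0$), then invoking $t^{q}\le t^{r}$ for $0\le t<1$, $0<r\le q$, and closing with a squeeze or comparison --- is the standard argument and is complete.
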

\begin{proof}
(i) is easy to prove. So we omit it. Now to prove (ii) let us assume that $X\in l_{\infty} ^F [u,v,f,r]$, then we have
\begin{eqnarray*}
\sup_k\Big[f\Big(\Big|u_k\sum\limits_{i=0}^k v_id(X_i,\bar{0})\Big|\Big)\Big]^{r_k}<M
\end{eqnarray*}
for some $M>0$. $t_k=\Big[f\Big(\Big|u_k\sum\limits_{i=0}^k v_id(X_i,\bar{0})\Big|\Big)\Big]^{r_k}$. Now define
\begin{center}
$Y_k = \left\{
\begin{array}{c l}
  X_k, & \mbox{when }t_k<1 \\
  0, & \mbox{otherwise}
\end{array}
\right.$
and $Z_k = \left\{
\begin{array}{c l}
  X_k, & \mbox{when }t_k\geq1 \\
  0, & \mbox{otherwise}
\end{array}
\right.$
\end{center}
\begin{eqnarray*}
\mbox{So,}\ X_k=Y_k+Z_k\ \mbox{i.e. } d(X_k,\bar0) &\leq& d(Y_k,\bar{0})+d(Z_k,\bar{0})\\
\mbox{i.e.}\ \Big[f\Big(\Big|u_k\sum\limits_{i=0}^k v_id(X_i,\bar{0})\Big|\Big)\Big]^{q_k} &=& B\Big[f\Big(\Big|u_k\sum\limits_{i=0}^k v_id(Y_i,\bar{0})\Big|\Big)\Big]^{q_k}\\ && +B\Big[f\Big(\Big|u_k\sum\limits_{i=0}^k v_id(Z_i,\bar{0})\Big|\Big)\Big]^{q_k}\\ && \mbox{where $B$ is a constant.}\\
&\leq& B\Big[f\Big(\Big|u_k\sum\limits_{i=0}^k v_id(Y_i,\bar{0})\Big|\Big)\Big]^{r_k}\\ && +B\Big\{\Big[f\Big(\Big|u_k\sum\limits_{i=0}^k v_id(Z_i,\bar{0})\Big|\Big)\Big]^{r_k}\Big\}^{q_k/ r_k}\\
&\leq& B[M+M^{q_k/r_k}]
\end{eqnarray*}
Since $(q_k/r_k)$ is bounded, so taking supremum over $k$ in the above inequality, the R.H.S will always be finite. Therefore $(X_k)\in l_{\infty} ^F [u,v,f,q]$. Thus $l_{\infty} ^F [u,v,f,r]\subseteq l_{\infty} ^F [u,v,f,q]$.
\end{proof}

\begin{theorem}
Let $f,g$ be two modulus functions. Then the following relations hold:
\begin{enumerate}[(i)]
\item If $f(t)\leq g(t)$ for all $t\in\mathbb{R}^+$, then $\lambda^F [u,v,g,r]\subseteq \lambda^F [u,v,f,r]$ for $\lambda=c,c_0,l_1, l_{\infty}$.
\item $\lambda^F [u,v,f,r]\cap \lambda^F [u,v,g,r]\subset\lambda^F [u,v,f+g,r]$ for $\lambda=c,c_0,l_1, l_{\infty}$.
\item $\lambda^F [u,v,f,r]\subset \lambda^F [u,v,g\circ f,r]$ for $\lambda=c,c_0$.
\item If $\lim\limits_{t\to\infty} \frac{f(t)}{t}>0$, then $\lambda^F (u,v,f^n,r)= \lambda^F (u,v,r)$ for some positive integer $n$ for $\lambda=c,c_0$.
\end{enumerate}
\end{theorem}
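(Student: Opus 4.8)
The plan is to reduce each of the four inclusions to a pointwise inequality between the quantities $\sigma_k(X)=|u_k\sum_{i=0}^k v_i d(X_i,\bar{0})|$ evaluated under the relevant moduli, and then to transport that inequality through the summability condition defining $\lambda$. Throughout I will use the two elementary estimates from Section~2, namely $(p+q)^{r_k}\le D(p^{r_k}+q^{r_k})$ for $p,q\ge0$ and $c^{r_k}\le\max(1,c^H)$, together with the fact (already exploited in the completeness proof) that, since $(r_k)$ is bounded above by $H$, $[f(\sigma_k)]^{r_k}\to0$ forces $f(\sigma_k)\to0$ and hence $\sigma_k\to0$. For $\lambda=c$ every argument below is unchanged once $d(X_i,\bar{0})$ is replaced by $d(X_i,X_0)$ for the appropriate limit point $X_0$.

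Parts (i) and (ii) are then routine. For (i), the hypothesis $f\le g$ gives $f(\sigma_k)\le g(\sigma_k)$, hence $[f(\sigma_k)]^{r_k}\le[g(\sigma_k)]^{r_k}$ for every $k$; membership in $\lambda^F[u,v,f,r]$ follows by comparison when $\lambda=l_1$, by the squeeze principle when $\lambda=c_0$ (or $c$), and by boundedness when $\lambda=l_\infty$. For (ii), since $(f+g)(\sigma_k)=f(\sigma_k)+g(\sigma_k)$, the first estimate gives $[(f+g)(\sigma_k)]^{r_k}\le D([f(\sigma_k)]^{r_k}+[g(\sigma_k)]^{r_k})$, and because $X$ lies in both intersected spaces the right-hand side is summable, null, or bounded as required.

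The content is in (iii) and (iv). For (iii) with $\lambda=c_0$, let $X\in c_0^F[u,v,f,r]$ and set $t_k=f(\sigma_k)$; by the remark above $t_k\to0$, and since the modulus $g$ is continuous with $g(0)=0$ we get $g(t_k)\to0$, i.e. $(g\circ f)(\sigma_k)\to0$. I would make this explicit by fixing $0<\varepsilon<1$, choosing $\delta$ with $g(\delta)<\varepsilon$, noting $t_k<\delta$ eventually, and then estimating $[g(t_k)]^{r_k}$. I expect the exponent to be the main obstacle: on the range where $t_k<\delta$ one only gets $[g(t_k)]^{r_k}<\varepsilon^{r_k}$, which tends to $0$ precisely when $\inf_k r_k>0$ (the implicit standing assumption), while on the complementary range the modulus estimate $g(t_k)\le(1+t_k/\delta)g(\delta)\le C\,t_k$, combined with $c^{r_k}\le\max(1,c^H)$, transfers the decay of $t_k^{r_k}$.

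For (iv) the goal is the two-sided comparison $f^n(t)\asymp t$, where $\lambda^F(u,v,r)$ denotes the space obtained with the identity modulus. The inclusion $\lambda^F(u,v,r)\subseteq\lambda^F(u,v,f^n,r)$ is proved exactly as in (iii), taking $f$ to be the identity and $g=f^n$, and uses only continuity of $f^n$ at $0$. The reverse inclusion is the decisive step and is where $\beta:=\lim_{t\to\infty}f(t)/t>0$ is used: from subadditivity $f(mt)\le m\,f(t)$ for every integer $m\ge1$, so $f(mt)/(mt)\le f(t)/t$, and letting $m\to\infty$ yields $\beta\le f(t)/t$, that is $f(t)\ge\beta t$ for all $t>0$. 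Iterating gives $f^n(t)\ge\beta^n t$, hence $\sigma_k\le\beta^{-n}f^n(\sigma_k)$ and $\sigma_k^{r_k}\le\max(1,\beta^{-nH})[f^n(\sigma_k)]^{r_k}$; therefore $[f^n(\sigma_k)]^{r_k}\to0$ forces $\sigma_k^{r_k}\to0$, giving $\lambda^F(u,v,f^n,r)\subseteq\lambda^F(u,v,r)$ and so equality. The case $\lambda=c$ follows verbatim after centering at the common limit point.
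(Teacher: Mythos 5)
Your proposal is correct and follows essentially the same route as the paper: pointwise comparison of $\big[f(\sigma_k)\big]^{r_k}$ for (i)--(ii), the $\epsilon$--$\delta$ continuity-of-$g$ argument for (iii), and the iterated bound $f^n(t)\geq \beta^n t$ with the constant $\max\{1,\beta^{-nH}\}$ for (iv). The only differences are points in your favor: you derive $f(t)\geq \beta t$ directly from subadditivity (via $f(mt)\leq mf(t)$) where the paper cites Maddox, and in (iii) you explicitly flag that passing from $(g\circ f)(\sigma_k)\to 0$ to $\big[(g\circ f)(\sigma_k)\big]^{r_k}\to 0$ needs $\inf_k r_k>0$, a step the paper performs silently even though it only assumes $(r_k)$ bounded.
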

\begin{proof}
\begin{enumerate}[(i)]
\item is easy to check. so we omit it.
\item For any two functions $f,g$, we have,
\begin{eqnarray*}
(f+g)\Big(\Big|u_k\sum\limits_{i=0}^k v_id(X_i,\bar{0})\Big|\Big)=f\Big(\Big|u_k\sum\limits_{i=0}^k v_id(X_i,\bar{0})\Big|\Big)+g\Big(\Big|u_k\sum\limits_{i=0}^k v_id(X_i,\bar{0})\Big|\Big)
\end{eqnarray*}
which easily gives $\lambda^F [u,v,f,r]\cap \lambda^F [u,v,g,r]\subset\lambda^F [u,v,f+g,r]$.
\item From the continuity of $g$, we have for a pre-assigned positive $\epsilon>0$, $\exists \ \delta>0$ such that $g(\delta)<\epsilon$.
Now assume $X\in c_0 ^F [u,v,f,r]$. Then $f\Big(\Big|u_k\sum\limits_{i=0}^k v_id(X_i,\bar{0})\Big|\Big)<\delta$ for all $k\geq k_0$ for some fixed $k_0\in\mathbb{N}$. Thus we get
    \begin{equation*}
    g\Big(f\Big(\Big|u_k\sum\limits_{i=0}^k v_id(X_i,\bar{0})\Big|\Big)\Big)<g(\delta)<\epsilon.
    \end{equation*}
    So, $X\in c_0 ^F [u,v,g\circ f,r]$. Similarly, we can prove it for $\lambda=c$ also.
\item Putting $f=I$, Identity function on $[0,\infty)$ and $g=f^n$ for some positive integer $n$ in (iii), we got $\lambda^F [u,v,r]\subseteq \lambda^F [u,v,f^n,r]$. To prove the converse, let $\beta=\lim\limits_{t\to\infty} \frac{f(t)}{t}$. Then from \cite{Mad1}, $\beta=\inf\{\frac{f(t)}{t};t>0\}$. So there is a $\beta>0$ such that $f(t)\geq \beta t$ for all $t\geq0$.\\
    So $f^2(t)\geq f(\beta t)\geq \beta\cdot\beta t=\beta^2 t$. Generalizing we get $f^n(t)\geq \beta^n t$ for some positive $n$. Assume $(X_k)\in \lambda^F (u,v,f^n,r)$. Now putting $t=|u_k\sum\limits_{i=0}^k v_id(X_i,\bar{0})|$,
\begin{eqnarray*}
\Big(\Big|u_k\sum\limits_{i=0}^k v_id(X_i,\bar{0})\Big|\Big)^{r_k} &\leq& \Big[\beta^{-n}f^n\Big(\Big|u_k\sum\limits_{i=0}^k v_id(X_i,\bar{0})\Big|\Big)\Big]^{r_k}\\
&\leq& \max\{1,\beta^{-nH}\} \Big[f^n\Big(\Big|u_k\sum\limits_{i=0}^k v_id(X_i,\bar{0})\Big|\Big)\Big]^{r_k}\\
\mbox{i.e.}\ (X_k) &\in & \lambda^F (u,v,r)
\end{eqnarray*}
Combining we get $\lambda^F (u,v,f^n,r)= \lambda^F (u,v,r)$.
\end{enumerate}
\end{proof}

\begin{remark}
\begin{enumerate}[(i)]
\item In theorem 2.9(iii), if $g$ be bounded, then the inclusion relation holds for $\lambda=l_{\infty}$ also.
\item If $f$ be bounded in theorem 2.9(iv), then the inclusion relation is true for $\lambda=l_{\infty}$ also.
\end{enumerate}
\end{remark}

\bibliographystyle{model1-num-names}

\end{document}